\documentclass{amsart}

\usepackage[utf8]{inputenc}
\usepackage[english]{babel}
\usepackage[IL2]{fontenc}

\usepackage{standalone}
\usepackage{tikz}
\usetikzlibrary{shapes}

\usepackage{amsaddr}

\usepackage{amssymb}
\usepackage{amsmath}
\usepackage{amsthm}
\usepackage{mathtools}
\usepackage{graphicx}
\usepackage{dynkin-diagrams}
\usepackage{enumitem}
\usepackage{array}
\usepackage{multirow}
\usepackage{booktabs}
\usepackage{xcolor}

\usepackage{algpseudocode}

\usepackage{hyperref}

\theoremstyle{plain}

\newtheorem{theorem}{Theorem}

\newtheorem{lemma}[theorem]{Lemma}
\newtheorem{proposition}[theorem]{Proposition}

\newtheorem{algorithm}[theorem]{Algorithm}

\theoremstyle{definition}

\newtheorem*{definition}{Definition}
\newtheorem*{example}{Example}
\newtheorem*{observation}{Observation}
\newtheorem*{convention}{Convention}

\theoremstyle{remark}

\newtheorem*{remark}{Remark}

\newcommand{\N}{\mathbb{N}}
\newcommand{\R}{\mathbb{R}}

\newcommand{\xlra}[1]{\xrightarrow{\ #1\ }}
\newcommand{\mc}[1]{\mathcal{#1}}
\newcommand{\mfk}[1]{\mathfrak{#1}}

\newcommand{\on}[1]{\operatorname{#1}}

\newcommand{\di}{\mathrm{d}}

\begin{document}

\title{The harmonic curvature of 3-link snake robots} 
\author{Martin Dole\v{z}al}
\address{Department of Mathematics and Statistics, Masaryk University, Brno, Czech Republic, and Department of Mathematics, University of Hradec Kr\'{a}lov\'{e}, Hradec Kr\'{a}lov\'{e}, Czech Republic}

\begin{abstract}
    The $3$-link snake robot is an example of a non-holonomic mechanical system with rank $2$ distribution in a $5$-dimensional configuration space. It is one of $(2,3,5)$-geometries, and as such, it admits a description by a parabolic geometry of type $(G_2,P)$. Another example of $(2,3,5)$-geometry was well-studied years ago, and it is known that for balls rolling one over the other without slipping or twisting, if the ratio of ball radii is $1:3$, then it is locally isomorphic to the flat model in sense of $(G_2,P)$ parabolic geometries. Answering a question by P.~Nurowski, we are looking for parameters of the $3$-link snake robots yielding a locally flat $(2,3,5)$-geometry. We extend the observation of a previous paper that the distributions of the snake robots contain bases generating finite dimensional Lie algebras. We exploit this observation to simplify the exterior calculus of the robots' geometry. This allows us to implement an effective normalization procedure and we obtain an explicit binary quartic invariant of the robot. Finally, we show that it does not vanish for any of the parameters. Therefore, the locally flat model cannot be achieved for these types of snake robots.
\end{abstract}

\subjclass[2020]{Primary 58A30; Secondary 58A15, 53A17, 17B66}

\keywords{$(2,3,5)$-distribution, snake robot, parabolic geometry, Lie algebra of vector fields}

\thanks{I would like to thank my supervisor Jan Slov\'{a}k for many helpful comments during writing this paper, and to Jan Gregorovi\v{c} for helping me with an early version of the normalization algorithm implementation.\\ \hspace*{1.5em}
This work was supported by Masaryk University under Grant MUNI/A/1648/2025 for doctoral students and further supported from COST Action CaLISTA CA21109 supported by COST (European Cooperation in Science and Technology, \url{www.cost.eu}), and Horizon Europe Framework Programme (HORIZON), project nr. 101086123, CaLIGOLA.
}

\maketitle

\section{Introduction}

We are going to study kinematics of a planar robot, so called $3$-link snake, from the geometric viewpoint.

The $3$-link snake robot is a type of planar robot consisting of three segments connected by two joints, whose movements are restricted by a non-slipping condition on its three fixed wheels that are placed in the middle of each segment. Any position of the robot can be uniquely described as a point in the configuration space
\[ \mc{M} = S^1 \times S^1 \times S^1 \times \R^2,\]
i.e.~by three angles and a point on the plane. The non-slipping condition left two degrees of freedom, which are usually realized by setting the angle on both of the joints. These two elemental movements infinitesimally form two vector fields that span the $2$-dimensional subspace $\mc{D}_x \subset T_x\mc{M}$ of the tangent space in any generic point of $\mc{M}$%
\footnote{The excluded points of $\mc{M}$ are the position of straight or folded snake, and the positions in which the wheels of the snake lie on a circle.},
i.e.~they generate the rank $2$ distribution $\mc{D}$ corresponding to the robot kinematics. This distribution is bracket-generating with the growth vector $(2,3,5)$, i.e.~one can obtain $5$ independent vector fields in two steps by applying the Lie bracket to vector fields in the distribution $\mc{D}$. By Chow-Rashewskii theorem, any bracket-generating distribution is controllable, therefore the robot can move to any other position on an open subset where $\mc{D}$ is bracket-generating.

A study of this robot's possible movements probably first appeared in \cite{pptm1999}, although similar snake-like robots were studied earlier by S.~Hirose \cite{sh1993}. In last two decades, several authors studied its point-to-point controllability using modern geometric tools, e.g.~\cite{mi2009} and \cite{jhanpv2016}. From the viewpoint of Cartan geometries, the $3$-link snake was studied by Nurowski, see his presentation \cite{pn2014} and recent publication \cite{tf2024}.

The configuration space $\mc{M}$ enjoys the structure of $(2,3,5)$ filtered manifold and uniquely corresponds to a parabolic geometry of type $(G_2,P)$ with a regular normal Cartan connection. Such structures were studied already by Cartan in \cite{ec1910}, who described most cases that admit a transitive Lie group action that preserves $\mc{D}$. Nurowski considered the $3$-link snake in a more general way, and observed that its kinematics forms a $(2,3,5)$ filtered manifold for an arbitrary choice of segments' lengths and wheels positions.

Another $(2,3,5)$-geometry is one describing motion of two balls, one rolling on the other without slipping or spinning. It is known that for the ratio of the balls $1:3$, the geometry becomes the locally flat model of the $(G_2,P)$ parabolic geometry, i.e.~with $14$-dimensional group of symmetries, while for all other parameters it forms a homogeneous model with the symmetry algebra $\mfk{so}(3)\times \mfk{so}(3)$, see \cite[p.~16]{dt2022} and \cite{gb2009}. Naturally, we should like to know whether there are some locally flat $3$-link snake robot for specific choices of parameters. This would be interesting not only from a theoretical point of view, but also very helpful for the control theory problem. Nurowski posed this question in \cite{pn2014}. Although he didn't find the answer in full generality due to the computational complexity, he answered negatively the question for the special position of the central wheel with use of CR-geometries.

In this paper, we are answering this question negatively. We explicitly compute an invariant called Cartan binary quartic form for all generalized $3$-link snakes considered by Nurowski. This bi-quartic form is an invariant of $(2,3,5)$-geometries, already found by Cartan in \cite{ec1910}, equivalent to the notion of the harmonic curvature in parabolic geometry. We use the parabolic geometry approach to find the corresponding regular, normal Cartan connection for all possible segment lengths and wheel positions, and the harmonic curvature will immediately follow.

Following \cite{md2024}, we consider vector fields that correspond to the steering of the central wheel, i.e.~the rotation and the forward motion, although the distribution $\mc{D}$ is usually generated by the vector fields corresponding to the rotations of the joints. The resulting distributions are the same in any generic position, and one can easily express the first vector fields by the others. Also in \cite{md2024}, we observed that these two vector fields even generate a $6$-dimensional Lie subalgebra of the Lie algebra of all vector fields $\mfk{X}(\mc{M})$. Although, there is no obvious way how to find such vector fields in the distributions $\mc{D}$ generating finite dimensional Lie subalgebra of $\mfk{X}(\mc{M})$ in general, we are able to do so for all general snakes here. Such Lie subalgebras offer a way how to substitute into the structure equations of an coframe adapted to the $(2,3,5)$-filtration, which makes it handy for the normalization procedure leading to the regular, normal Cartan connection.

We proceed in the following order. For the convenience of the reader, we briefly recall the result of the author's last paper \cite{md2024} and present the techniques of this paper on the simplest example of the snake robot. Then, we show vector fields of the distribution of the generalized $3$-link snake for any choice of parameters, and how they form finite dimensional Lie subalgebra of $\mfk{X}(\mc{M})$. In section \ref{section:coframe_structure}, we derive the handy form of structure equations. Afterwards, we explain the normalization procedure for parabolic geometries associated with $(2,3,5)$-distributions, following Section 3.1 of \cite{parabook} and \cite{dt2022}. Finally, we present the results and remarks.

Most of the computations were implemented in \texttt{Maple17} with  \texttt{DifferentialGeometry} package (\cite{dgpkg}). Some computations were made in \texttt{Python} with \texttt{SymPy} in \texttt{Jupyter} notebook. The worksheets for \texttt{Maple17} and the notebook for \texttt{Python} have been attached to this paper, see the \emph{attached files} \cite{attached_files}. References to the files are provided wherever in the text claims are supported by computer algebra computations.

\section{Standard 3-link snake} \label{section:std_snake}

The $3$-link snake is a planar robot that consists of three segments of length~$1$ equipped with wheels, one in the center of each segment, and the segments are connected by two movable joints. Any position of the $3$-link snake uniquely corresponds to a point of the $5$-dimensional configuration space
\[ \mc{M} = S^1 \times S^1 \times S^1 \times \R^2.\]
Its movements are restricted by non-slipping conditions in the wheels, i.e.~the velocity vector of each segment's center is collinear with the direction of the segment. All its allowed infinitesimal movements form rank $2$ distribution $\mc{D}$ on the configuration space $\mc{M}$.

\begin{figure}[hbt]
  \centering
  \includegraphics[width=0.6\textwidth]{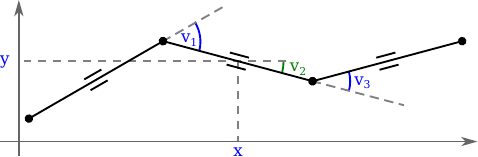}
  \caption{The coordinate system $(v_1,v_2,v_3,x,y)$ on $\mc{M}$}
  \label{fig:snake_std}
\end{figure}

In order to describe the distribution $\mc{D}$, we choose a coordinates system $(v_1, v_2, v_3, x, y)$ of $\mc{M}$ that is chosen as shown in Figure \ref{fig:snake_std}. We want the vector fields in the distribution $\mc{D}$ to satisfy the non-slipping condition in the wheels.

\begin{proposition} \label{prop:generators_standard}
    The distribution $\mc{D}$ of admissible velocities of the robot admits the following vector fields as a basis of $\mc{D}$ in each point of $\mc{M}$.
    \begin{equation}\label{generators_standard}
    \begin{gathered}
        X_1 = (1 + \cos{v_1})\, \partial_{v_1} - \partial_{v_2} + (1 + \cos{v_3})\, \partial_{v_3},\\
        X_2 = -2 \sin v_1\, \partial_{v_1} + 2 \sin v_3\, \partial_{v_3} + \cos v_2\, \partial_{x} + \sin v_2\, \partial_{y}.
    \end{gathered}
    \end{equation}
\end{proposition}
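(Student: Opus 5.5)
The plan is to write the non-slipping conditions as three Pfaffian constraints $\theta_1,\theta_2,\theta_3$ on $\mc{M}$ and define $\mc{D}=\ker\theta_1\cap\ker\theta_2\cap\ker\theta_3$. The proposition then reduces to two checks: that $X_1,X_2$ are annihilated by all three forms, and that $\theta_1,\theta_2,\theta_3$ are independent while $X_1,X_2$ are independent at every point. That gives $\operatorname{rank}\mc{D}=2$, so $X_1,X_2$ span it.

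I read Figure~\ref{fig:snake_std} as follows: $(x,y)$ is the centre of the middle wheel, $v_2$ is the absolute angle of the middle segment, and $v_1,v_3$ are the relative joint angles of the outer segments. With $e(\varphi)=(\cos\varphi,\sin\varphi)$ and $n(\varphi)=(-\sin\varphi,\cos\varphi)$, the wheel centres are
\[ c_2=(x,y),\qquad c_1=c_2-\tfrac12 e(v_2)-\tfrac12 e(v_2\pm v_1),\qquad c_3=c_2+\tfrac12 e(v_2)+\tfrac12 e(v_2\pm v_3), \]
where the signs are fixed by the orientations in the figure. Non-slipping at the $i$-th wheel says that $\dot c_i$ is parallel to that segment. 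So I take $\theta_i=\langle \di c_i, n(\text{angle of segment } i)\rangle$. Differentiating gives
\[ \theta_2=-\sin v_2\,\di x+\cos v_2\,\di y , \]
together with
\[ \theta_1=\langle(\di x,\di y),n(v_2\pm v_1)\rangle+\tfrac12\cos v_1\,\di v_2+\tfrac12(\di v_2\pm\di v_1) \]
up to sign, and $\theta_3$ is analogous. These three forms are visibly independent: each $\theta_1,\theta_3$ contains $\di v_1$, respectively $\di v_3$, which the other two forms lack.

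Next I evaluate the forms on $X_1$ and $X_2$.
\begin{itemize}
\item $X_1$ has no $\partial_x,\partial_y$ component, so $\theta_2(X_1)=0$ trivially. The outer constraints reduce to $-\tfrac12\cos v_1+\tfrac12(-1+1+\cos v_1)=0$ and its $v_3$-analogue.
\item $X_2$ moves the middle wheel along $e(v_2)$, so $\theta_2(X_2)=0$. In the outer constraints, the term $\langle e(v_2),n(v_2\pm v_1)\rangle=\pm\sin v_1$ must cancel the contribution $\tfrac12\cdot(\mp 2\sin v_1)$ of the $\partial_{v_1}$ component.
\end{itemize}
Finally, $X_1$ and $X_2$ are independent everywhere: $X_1$ has $\partial_{v_2}$-coefficient $-1$ and $X_2$ has none, while $X_2$ has the nonvanishing $(\partial_x,\partial_y)$ part $e(v_2)$ and $X_1$ has none. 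Kinematically, $X_1$ rotates the middle segment about its wheel and $X_2$ drives that wheel forward, which matches the steering interpretation of \cite{md2024}.

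The main obstacle is bookkeeping the orientation conventions. This covers the sign with which $v_1,v_3$ enter the absolute angles of the outer segments, and whether the outer segments are attached at $c_2\pm\tfrac12 e(v_2)$. A naive choice makes the cancellation in $\theta_1(X_2)$ fail by a sign, producing $-2\sin v_1$ instead of $0$. So I would first fix the signs by requiring $\theta_1(X_1)=\theta_1(X_2)=0$, check that this is consistent with Figure~\ref{fig:snake_std}, and only then run the verification symmetrically for the third segment.
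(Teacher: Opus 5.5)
Your plan is the paper's proof. The paper writes the same three non-slipping forms, with the $+$ choice in both of your $\pm$, i.e.\ $c_1=(x,y)-\tfrac12 e(v_2)-\tfrac12 e(v_2+v_1)$ and $c_3=(x,y)+\tfrac12 e(v_2)+\tfrac12 e(v_2+v_3)$. It checks by direct computation that $X_1,X_2$ annihilate them, and gets independence of $X_1,X_2$ from the $\partial_{v_2}$ versus $\partial_x,\partial_y$ components.

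The one thing to correct is your displayed $\theta_1$, which does not follow from your own $c_1$. Differentiating $c_1$ gives
\[ \theta_1=\langle(\di x,\di y),n(v_2+v_1)\rangle-\tfrac12\cos v_1\,\di v_2-\tfrac12(\di v_1+\di v_2). \]
So the angular terms carry a minus sign relative to the translational term. What you wrote is the $\theta_3$-type expression with $v_3$ replaced by $v_1$. Also, $\langle e(v_2),n(v_2+v_1)\rangle=\sin(-v_1)=-\sin v_1$, not $+\sin v_1$. In your bullet these two sign errors happen to compensate.

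With the correct form you get $\theta_1(X_2)=-\sin v_1-\tfrac12(-2\sin v_1)=0$ and $\theta_1(X_1)=\tfrac12\cos v_1-\tfrac12\cos v_1=0$. Meanwhile $\theta_3$ with the $+$ signs works exactly as you set it up.

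So the $-2\sin v_1$ failure you anticipate is a differentiation slip, not an orientation ambiguity. You should not ``fix the signs by requiring $\theta_1(X_1)=\theta_1(X_2)=0$''. That would be circular: the constraints must be read off from the robot's geometry and only then tested on $X_1,X_2$.
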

\begin{proof}
    As seen from the Figure \ref{fig:snake_nonslipping}, the velocity vectors of the segment centers, all considered in $\R^2$, must satisfy
    {\footnotesize
    \begin{align*}
        \tfrac{\di}{\di t} (x,y) &\perp (-\sin v_2, \cos v_2),\\
        \tfrac{\di}{\di t} \bigl( (x,y) + \tfrac{1}{2}(\cos v_2, \sin v_2) + \tfrac{1}{2}(\cos(v_3 + v_2), \sin (v_3 + v_2)) \bigr) &\perp (-\sin (v_3+v_2), \cos (v_3 + v_2)),\\
        \tfrac{\di}{\di t} \bigl( (x,y) - \tfrac{1}{2}(\cos v_2, \sin v_2) - \tfrac{1}{2}(\cos(v_1 + v_2), \sin (v_1 + v_2)) \bigr) &\perp (-\sin (v_1+v_2), \cos (v_1 + v_2)).
    \end{align*}}%
    We rewrite them using the standard scalar product, and they correspond to vanishing of the three linearly independent one-forms
    \begin{align*}
        \varphi_1 &= -\sin v_2\, \di x + \cos v_2\, \di y,\\
        \varphi_2 &= -\sin(v_3 + v_2)\, \di x + \cos(v_3 + v_2)\, \di y + \tfrac{1+\cos v_3}{2}\, \di v_2 + \tfrac{1}{2}\, \di v_3,\\
        \varphi_3 &= -\sin(v_1 + v_2)\, \di x + \cos(v_1 + v_2)\, \di y - \tfrac{1}{2}\, \di v_1 - \tfrac{1+\cos v_1}{2}\, \di v_2.
    \end{align*}
    These one-forms are annihilated by the vector fields~\eqref{generators_standard}, as one can show by direct computation, e.g.~in Maple \cite[\texttt{std\_snake.mw}]{attached_files}.
    The vector fields~\eqref{generators_standard} are obviously linearly independent in any point of $\mc{M}$, as one contains $\partial_{v_2}$ term and the other contains $\partial_x, \partial_y$, therefore they generate the distribution $\mc{D}$.
\end{proof}

\begin{figure}[hbt]
  \centering
  \includegraphics[width=0.6\textwidth]{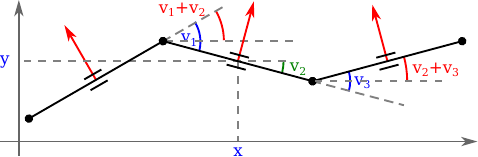}
  \caption{The non-slipping condition on wheels}
  \label{fig:snake_nonslipping}
\end{figure}

The vector field $X_1$ represents the longitudinal movement of the central segment of the snake, and $X_2$ represents the rotational movement of the central segment.

\begin{definition}
    We call a distribution $\mc{D}$ \emph{bracket-generating} if there is $r \in \N$ with the following filtration
    \begin{equation} \label{growth_vector}
    \begin{gathered}
        TM = \mc{D}^{r} \supset \dots \supset \mc{D}^{2} \supset \mc{D}^{1} \coloneqq \mc{D},\\
        \mc{D}^{i+1} \coloneqq \mc{D}^{i} + [\mc{D},\mc{D}^{i}].
    \end{gathered}
    \end{equation}

    The sequence $(\dim \mc{D}, \dim \mc{D}^2, \dots, \dim \mc{D}^r)$ is called the \emph{growth vector} of the distribution $\mc{D}$.
\end{definition}

Let us define $X_3, X_4, X_5$ by Lie brackets of $X_1, X_2$ such that
\begin{equation*}
    \begin{gathered}
        X_1,\\ X_2,
    \end{gathered}
    \quad X_3 = [X_1,X_2], \quad
    \begin{gathered}
        X_4 = [X_1, X_3],\\
        X_5 = [X_2, X_3].
    \end{gathered}
\end{equation*}
The vector fields $X_1, \dots, X_5$ are linearly independent in any point of an open and dense subset of $\mc{M}$, therefore the growth vector is $(2,3,5)$ in these points. The distribution $\mc{D}$ is bracket-generating in any point of $\mc{M}$ except the positions where an adjacent segment overlaps the central one. However, the straight positions and the positions with wheels lying on a circle have the growth vector $(2,3,4,5)$.

\begin{remark}
    Consider an open connected subset of $\mc{M}$ where $\mc{D}$ is bracket-generating. Thanks to Chow-Rashevskii theorem, we can join any two points of this subset by a path tangent to $\mc{D}$.
    
    Moreover, any non-overlapped position of the snake can be smoothly transformed to any other non-overlapped one, i.e.~ the open subset of non-overlapped positions is even path-connected, and any two such positions can be connected by flows of the vector fields $X_1, X_2$.
\end{remark}

Further, when we add the vector field
\[X_6 = [X_1,X_4],\]
we realize that $X_1, \dots, X_6$ form a Lie subalgebra of $\mfk{X}(\mc{M})$ with the multiplication table displayed in Table~\ref{multiplication_table}.

\begin{table}[htb]
\centering
 \begin{tabular}{c|cccccc}
    $[\cdot, \cdot]$ & $X_1$ & $X_2$ & $X_3$ & $X_4$ & $X_5$ & $X_6$\\ \hline
    $X_1$ & 0 & $X_3$ & $X_4$ & $X_6$ & 0 & $-X_4$ \\
    $X_2$ & $-X_3$ & 0 & $X_5$ & 0 & $4 X_3+ 4 X_6$ & 0 \\
    $X_3$ & $-X_4$ & $-X_5$ & 0 & 0 & 0 & 0 \\
    $X_4$ & $-X_6$ & 0 & 0 & 0 & 0 & 0 \\
    $X_5$ & 0 & $-4 X_3 - 4 X_6$ & 0 & 0 & 0 & 0 \\
    $X_6$ & $X_4$  & 0 & 0 & 0 & 0 & 0 
 \end{tabular}
 \caption{\rule{0pt}{15pt}
 Multiplication table of vector fields $X_1, \dots X_6$.}
 \label{multiplication_table}
\end{table}

Since the vector fields $X_1, \dots, X_5$ form a basis of $T\mc{M}$, we express $X_6$ in $X_1, \dots X_5$, obtaining%
\footnote{in \cite{md2024}, there was an error in a sign}
\begin{equation} \label{X6}
    X_6 = A \cdot X_2 - X_3 + A \cdot X_4 + B \cdot X_5.
\end{equation}
for the functions $A, B$ computed in \cite[\texttt{std\_snake.mw}]{attached_files} as
\[ A = \frac{(1 + \cos v_3) \sin v_1 - (1 + \cos v_1) \sin v_3}{\cos v_3 - \cos v_1},
\quad B = \frac{\sin v_3 \sin v_1 + \cos v_1 \cos v_3 - 1}{2\cos v_3 - 2\cos v_1}.\]

Take the coframe $\theta^1, \dots, \theta^5$ dual to the frame $X_1, \dots, X_5$, hence $\theta^i (X_j) = \delta^i_j$, where $\delta^i_j$ is the Kronecker delta. Using the formula for exterior derivatives of coframes
    \[ \di \theta^i (X_j,X_k) = -\theta^i ([X_j,X_k]), \]
the equation \eqref{X6} and Table~\ref{multiplication_table}, we find the structure equations as follows.
\begin{equation}\label{coframe_structure_standard_1}
    \begin{aligned}
    \di \theta^1 &= 0,\\
    \di \theta^2 &= -A\, \theta^1 \wedge \theta^4 - 4 A\, \theta^2 \wedge \theta^5,\\
    \di \theta^3 &= -\theta^1 \wedge \theta^2 + \theta^1 \wedge \theta^4,\\
    \di \theta^4 &= -\theta^1 \wedge \theta^3 - A\, \theta^1 \wedge \theta^4 - 4 A\, \theta^2 \wedge \theta^5,\\
    \di \theta^5 &= -\theta^2 \wedge \theta^3 - B\, \theta^1 \wedge \theta^4 - 4 B\, \theta^2 \wedge \theta^5.
    \end{aligned}\\
\end{equation}
Further, applying $[X_i,X_6]$, we readily obtain the directional derivatives $X_i(A), X_i(B)$. Since $\di A = \sum X_i(A) \theta^i$, $\di B = \sum X_i(B) \theta^i$, we receive the exterior derivatives of the functions $A,B$ that are polynomial in $A,B$,
\begin{equation} \label{coframe_structure_standard_2}
    \begin{aligned}
    \di A &= -A^2\, \theta^1 - 4 A B\, \theta^2 + 4 A^2\, \theta^5,\\
    \di B &= -A B\, \theta^1 + (1 - 4B^2)\, \theta^2 + A\, \theta^3 + 4 A B\, \theta^5.
    \end{aligned}
\end{equation}

\subsection{Symmetries} \label{section:std_snake_symmetries}
Manifolds with a $(2,3,5)$-distribution admit a description in terms of Cartan geometries, more specifically, parabolic geometries. We discuss this in detail in Section \ref{section:parabolic}. There is a unique \emph{normal regular} Cartan connection for any $(2,3,5)$-distribution \cite[pp. 431-432]{parabook}. Thus, symmetries of the Cartan connections are in one-to-one correspondence with the following concept.

\begin{definition}
    By \emph{symmetries} of the distribution $\mc{D}$, we understand the automorphisms $\phi\colon \mc{M} \to \mc{M}$ preserving the distribution $\mc{D}$, i.e.
\begin{equation*}
    \phi_* \mc{D} = \mc{D}.
\end{equation*}
For more visual interpretation, the symmetries of $\mc{D}$ are exactly automorphisms of $\mc{M}$ that map paths tangent to $\mc{D}$ (admissible robot's move) to other paths tangent to~$\mc{D}$.

\emph{Infinitesimal symmetries} are vector fields $S \in \mfk{X}(\mc{M})$ such that their flows
\begin{equation*}
    \mathrm{Fl}^S_t\colon \mc{M} \to \mc{M}
\end{equation*}
form a one-parameter family of $\mc{D}$-preserving automorphisms for $t\in (-\epsilon, \epsilon) \subset \R$. It is equivalent to the fact that their Lie brackets preserve $\mc{D}$ in the sense
\begin{equation}
    [S, X] \in \mc{D}
\end{equation}
for all $X \in \mc{D}$.
\end{definition}

Set of infinitesimal symmetries of a bracket-generating distribution $\mc{D}$
\begin{equation}
    \mc{S} = \left\{ S \in \mfk{X}(\mc{M}) \mid [S,X]\in \mc{D}\ \text{for all}\ X\in\mc{D} \right\}
\end{equation}
forms a Lie algebra and $\dim \mc{S}$ is bounded by the dimension of the structure group of the corresponding Cartan geometry \cite[Theorem 1.5.11]{parabook}, i.e.~by $\dim G_2 = 14$ in the case of $(2,3,5)$-distributions.

\begin{example}
    The obvious symmetries of the standard snake are rotations and translations of the whole robot on the plane. The corresponding infinitesimal symmetries are
    \begin{align*}
        \partial_x, \partial_y, \partial_{v_2} - y\, \partial_{x} + x\, \partial_y.
    \end{align*}
\end{example}

To find the unique Cartan connection, we start with any regular one and improve it until its curvature satisfies a certain condition. Any Cartan connection can be viewed as a matrix of one-forms built from $\theta^1, \dots, \theta^5$. Therefore, we use the equations \eqref{coframe_structure_standard_1}, \eqref{coframe_structure_standard_2}
in order to deal with the curvature. The fundamental part of the curvature of the normal regular Cartan connection can be equivalently written as a binary quartic form (\cite[p.~26]{dt2022}), which coincides with the biquartic invariant in the Cartan's paper \cite[p.~152]{ec1910}.

\begin{proposition} \label{prop:std_snake_symmetries}
    The Cartan biquartic of the standard 3-link snake is\\
{\footnotesize
    \begin{equation}
    \begin{aligned}
        \left(-\frac{128}{75}\right. &A^2 B^2 - \frac{82}{75} B^4 - \frac{16}{75} B^2- \frac{28}{75} A^4 - \frac{32}{75} A^2 + \left. \frac{8}{75} \right) x_1^4\\
        &+ 4 \cdot \left(\frac{16}{25} A^3 B+\frac{58}{25} A B^3-\frac{8}{25} A B \right) x_1^3 x_2\\
        &+ 6 \cdot \left(-\frac{566}{225} A^2 B^2 - \frac{64}{225} A^4 - \frac{104}{225} A^2 - \frac{1024}{225} B^4+ \frac{8}{225} + \frac{206}{225} B^2\right) x_1^2 x_2^2\\
        &+ 4 \cdot \left( \frac{58}{25} A^3 B + \frac{256}{25} A B^3 - \frac{38}{25} A B \right) x_1 x_2^3\\
        &+ \left( -\frac{2048}{75} A^2 B^2 - \frac{7168}{75} B^4 + \frac{2048}{75} B^2 - \frac{82}{75} A^4 - \frac{356}{75} A^2 - \frac{82}{75} \right) x_2^4,
    \end{aligned}
    \end{equation}}
\\ 
and the dimensions of the Lie algebra $\mc{S}$ of infinitesimal symmetries of $(\mc{M},\mc{D})$ is at most 5. 
\end{proposition}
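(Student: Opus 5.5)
The plan is to run the standard normalization procedure for $(G_2,P)$ parabolic geometries. The input is the adapted coframe $\theta^1,\dots,\theta^5$ together with the closed structure equations \eqref{coframe_structure_standard_1} and \eqref{coframe_structure_standard_2}.

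First, I will build an initial regular Cartan connection. I fix a grading $\mfk{g}_2=\mfk{g}_{-3}\oplus\dots\oplus\mfk{g}_3$ with $\mfk{g}_{-1}=\langle e_1,e_2\rangle$, $\mfk{g}_{-2}=\langle e_3\rangle$ and $\mfk{g}_{-3}=\langle e_4,e_5\rangle$, so that $[e_1,e_2]=e_3$, $[e_1,e_3]=e_4$ and $[e_2,e_3]=e_5$, matching the bracket pattern $X_3=[X_1,X_2]$, $X_4=[X_1,X_3]$, $X_5=[X_2,X_3]$. The negative part of the connection is $\theta^1,\dots,\theta^5$, and I set the $\mfk{p}$-part to zero initially. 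Regularity holds because the leading terms of $\di\theta^3,\di\theta^4,\di\theta^5$ are $-\theta^1\wedge\theta^2$, $-\theta^1\wedge\theta^3$ and $-\theta^2\wedge\theta^3$. These are exactly the Maurer–Cartan terms, and all remaining terms have nonnegative homogeneity.

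Next, I normalize degree by degree. At homogeneity $\ell=1,2,\dots$ I compute the curvature $\kappa=\di\omega+\tfrac12[\omega,\omega]$. Its coefficients lie in the ring $\R[A,B]$, since every derivative $\di A$, $\di B$ is rewritten through \eqref{coframe_structure_standard_2}. This is the key simplification: all exterior calculus stays inside polynomial expressions in $A,B$ and wedges of the $\theta^i$. I then split the homogeneous component as $\on{im}\partial\oplus\ker\partial^*$ using the Kostant codifferential $\partial^*$. The $\on{im}\partial$ part is killed by adding to $\omega_{\mfk{p}}$ a suitable $\mfk{g}_{\ell-?}$-valued combination of $\theta^i$ with coefficients in $\R[A,B]$, found by solving a linear system. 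Iterating through all homogeneities up to $6$ gives the normal connection. The process terminates because $H^2(\mfk{g}_-,\mfk{g})$ is concentrated in homogeneity $4$, where it is the $5$-dimensional binary quartic module. The harmonic curvature is then the $\ker\Box$ component in homogeneity $4$, read off as the coefficients of $\theta^1\wedge\theta^2$ with values in $\mfk{g}_1$ (equivalently, with the $S^4$ identification of \cite[p.~26]{dt2022}). Writing it in variables $x_1,x_2$ dual to $X_1,X_2$ should yield the stated quartic; I will verify this symbolically in Maple. The main obstacle is the bookkeeping of this normalization: choosing compatible bases, implementing $\partial^*$ correctly, and keeping the growth of polynomials in $A,B$ under control. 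I would implement it as a generic linear-algebra loop over homogeneities rather than by hand.

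For the symmetry bound, I argue as follows. For $(2,3,5)$-geometries the only possible dimensions of the symmetry algebra are $14$ (flat), or at most $7$ otherwise by the gap phenomenon. A sharper bound comes from the fact that infinitesimal symmetries preserve the harmonic curvature and hence its invariant functions. The orbit of a point under the symmetry algebra lies in the common level sets of all scalar invariants. From the quartic, I take its $SL_2$-invariants $I$ and $J$; being normalized by the appropriate conformal weights, they become functions of $(A,B)$. If these invariant functions are nonconstant, the orbits have dimension at most $4$ (if one is functionally independent) or at most $3$. The stabilizer of a generic point is contained in the stabilizer of a nonzero quartic of the given root type in $P$. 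For a generic quartic with distinct roots this stabilizer is at most $1$-dimensional; in fact, it is $0$-dimensional in the reduced $G_0$ and does not vanish only in special root types. So the bound is: orbit dimension at most $4$ plus stabilizer dimension at most $1$, giving at most $5$. I would check nonconstancy and the root type of the quartic directly, for instance by computing that the discriminant is not identically zero and that $\di(I^3/J^2)\neq 0$ via \eqref{coframe_structure_standard_2}.
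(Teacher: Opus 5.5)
Your overall plan matches the paper's: normalize a connection built on the coframe, using the closed structure equations \eqref{coframe_structure_standard_1}, \eqref{coframe_structure_standard_2}, then read off the quartic. However, two steps fail as written.

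\textbf{The normalization.} You keep the $\mfk{g}_-$-part of the connection equal to the given coframe and only add $\mfk{p}$-valued terms. That cannot produce a normal connection here.
For a normal $(2,3,5)$ connection, the lowest curvature component is both $\partial$-closed (Bianchi) and $\partial^*$-closed, hence harmonic. Harmonic curvature exists only in homogeneity $4$, so $\kappa_1=0$.
At homogeneity $1$ the only $\mfk{p}$-valued correction is $\gamma_1\theta^1+\gamma_2\theta^2$ with $\gamma_a\in\mfk{g}_0\cong\mfk{gl}_2$. Since $\di\theta^3$ has no $\theta^a\wedge\theta^3$ terms, the $e_3$-component forces $\on{tr}\gamma_a=0$. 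Hence $\gamma_a$ acts tracelessly on $\mfk{g}_{-3}\cong\mfk{g}_{-1}\otimes\Lambda^2\mfk{g}_{-1}$.
But cancelling the homogeneity-one torsion ($-A\,\theta^1\wedge\theta^4-4A\,\theta^2\wedge\theta^5$ in $\di\theta^4$, and $-B\,\theta^1\wedge\theta^4-4B\,\theta^2\wedge\theta^5$ in $\di\theta^5$) requires $\gamma_1|_{\mfk{g}_{-3}}$ to have trace $\pm A$ and $\gamma_2|_{\mfk{g}_{-3}}$ to have trace $\pm 4B$. This is impossible, since $A,B$ are independent functions, so your linear system at homogeneity $1$ is inconsistent.
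This is exactly why the paper's $t_h$ also contains $\mfk{g}_-$-valued unknowns such as $a^{1,1}_{-3,i}\theta^{-3,i}e_{-2,1}$ and $a^{1,j}_{-2,1}\theta^{-2,1}e_{-1,j}$. The coframe itself must be corrected, and the quartic is then evaluated on the corrected frame $X^N_{i,k}$.
Relatedly, the component you propose to read off ($\theta^1\wedge\theta^2$ with values in $\mfk{g}_1$) has homogeneity $3$ and vanishes for the normal connection. The quartic lives in $\mfk{g}_{-3}^*\wedge\mfk{g}_{-1}^*\otimes\mfk{g}_0$, as in the paper's formulas $A_i=\pm\Omega_N^{0,k}(X^N_{-3,\cdot},X^N_{-1,\cdot})$.

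\textbf{The symmetry bound.} The paper needs only $\Delta\not\equiv 0$, which holds because $\Delta$ is a nonzero polynomial in $A,B$ and the Jacobian of $(A,B)$ has rank $2$. It then applies the root-type bound of Proposition \ref{prop:biquartic} (Cartan, Kruglikov--The): four distinct roots give $\dim\mc{S}\le 5$.
Your orbit--stabilizer replacement does not close.
\begin{itemize}
\item The isotropy at $x$ embeds in $\mfk{p}$ and preserves $\kappa_H(u)$. But $P_+$ acts trivially on $H_2(\mfk{p}_+,\mfk{g})$, so the stabilizer of the quartic in $P$ contains the $5$-dimensional $P_+$. The fact $\mfk{ann}_{\mfk{g}_0}(Q)=0$ controls only the $\mfk{g}_0$-part, so your ``at most $1$'' is unjustified.
\item Controlling the $\mfk{p}_+$-part requires exactly the Kruglikov--The result: $\on{gr}$ of the isotropy lies in the Tanaka prolongation of $\mfk{g}_-\oplus\mfk{ann}_{\mfk{g}_0}(\kappa_H(u))$. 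For distinct roots that prolongation is just $\mfk{g}_-$, which gives $\dim\mc{S}\le 5$ directly, with no orbit argument needed.
\item $I$ and $J$ are only relative invariants: they rescale under $G_0$, so their values in your frame are not preserved by symmetries. The only absolute invariant is $I^3/J^2$, and its nonconstancy is not established; the paper leaves local transitivity open.
\item If $I^3/J^2$ were constant, your count (orbit $\le 5$, stabilizer $\le 1$) would give only $\dim\mc{S}\le 6$.
\end{itemize}
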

\begin{proof}
    We obtain the Cartan biquartic by implementation of the procedure of Section \ref{section:parabolic} in \cite[\texttt{std\_snake.mw}]{attached_files}. The Jacobi matrix of $A,B$ has rank two, therefore the biquartic does not vanish identically, neither does its discriminant. By original Cartan's result, the non-vanishing discriminant implies that $\dim \mc{S} \leq 5$, see Theorem~\ref{prop:biquartic} below.
\end{proof}

\section{Generalized 3-link snake} \label{section:gen_snake}

Let us generalize the snake robot by considering different segment lengths $\ell_1, \ell_2, \ell_3$ and relative wheels positions $a, b, c \in [0,1]$ on segments. Without loss of generality, we can put the wheels on the adjacent segments to the end points, i.e.~$a = 0$, $c = 1$, and set the length of the central segment $\ell_2 = 1$, as discussed in \cite{tf2024}, \cite{md2024} and \cite{dt2018}.

\begin{figure}[hbt]
  \centering
  \includegraphics[width=0.6\textwidth]{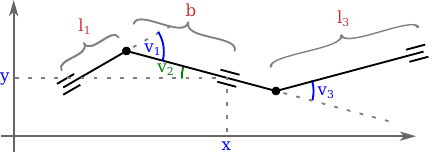}
  \caption{The 3-link snake with parameters $\ell_1,\ell_3,b$}
  \label{fig:snake_general}
\end{figure}

In the following, we will use $\ell_1, \ell_3, b$ as parameters in order to study the question posed by Paweł Nurowski in \cite{pn2014}, whether there is a choice of parameters for this robot to get a locally flat model of the correspondent parabolic geometry (i.e.~with $\mathfrak{g}_2$ as the algebra of infinitesimal symmetries) or one of the other homogeneous models which were described already in the famous Cartan's paper \cite{ec1910} and further discussed and completed in \cite{pn2005}, \cite{tw2019} and \cite{dt2022}.

The configuration space is the same as before, i.e.~$\mc{M} = S^1 \times S^1 \times S^1 \times \R^2$, and we consider the coordinates $(v_1,v_2,v_3,x,y)$ as depicted in Figure \ref{fig:snake_general}. Again, we define a distribution $\mc{D}_{\ell_1,\ell_3,b}$ from the non-slipping conditions in the wheels. For brevity, we write $\mc{D} = \mc{D}_{\ell_1,\ell_3,b}$ -- the standard $3$-link snake appears for $\ell_1 = \ell_3 = b = \frac{1}{2}$. Similarly, we choose basis vector fields for the distribution $\mc{D}$, so that they represent the longitudinal movements of the central segment and its rotational movement around the central wheel. We again denote them by $X_1,X_2$ and define as follows.

\begin{proposition}
    The distribution $\mc{D}$ admits the following vector fields $X_1,X_2$ as a basis.
    \begin{equation}\label{generators}
    \begin{gathered}
        X_1 = \left( 1 + \frac{b}{\ell_1} \cdot \cos(v_1) \right)\, \partial_{v_1} - \partial_{v_2} + \left(1 + \frac{1-b}{\ell_3} \cdot \cos(v_3)\right)\, \partial_{v_3},\\
        X_2 = -\frac{1}{\ell_1} \cdot \sin(v_1)\, \partial_{v_1} + \frac{1}{\ell_3} \cdot \sin(v_3)\, \partial_{v_3} + \cos(v_2)\, \partial_{x} + \sin(v_2)\, \partial_{y}.
    \end{gathered}
    \end{equation}

    The distribution $\mc{D}$ is bracket-generating with the growth vector $(2,3,5)$ in any generic point of $\mc{M}$ and for any choice of parameters $\ell_1,\ell_3>0, b\in [0,1]$.
\end{proposition}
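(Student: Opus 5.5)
The plan follows the proof of Proposition~\ref{prop:generators_standard}. First I write the non-slipping conditions in the coordinates of Figure~\ref{fig:snake_general} as three one-forms. The central wheel sits at $(x,y)$. The central segment runs from $(x,y)-b(\cos v_2,\sin v_2)$ to $(x,y)+(1-b)(\cos v_2,\sin v_2)$. The first wheel (with $a=0$) sits at the far end of segment one, namely at $(x,y)-b(\cos v_2,\sin v_2)-\ell_1(\cos(v_1+v_2),\sin(v_1+v_2))$. The third wheel (with $c=1$) sits at $(x,y)+(1-b)(\cos v_2,\sin v_2)+\ell_3(\cos(v_3+v_2),\sin(v_3+v_2))$.

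Requiring the velocity of each wheel to be orthogonal to the normal of its own segment gives three one-forms:
\begin{align*}
\varphi_1 &= -\sin v_2\,\di x+\cos v_2\,\di y,\\
\varphi_2 &= -\sin(v_3+v_2)\,\di x+\cos(v_3+v_2)\,\di y+\bigl((1-b)\cos v_3+\ell_3\bigr)\di v_2+\ell_3\,\di v_3,\\
\varphi_3 &= -\sin(v_1+v_2)\,\di x+\cos(v_1+v_2)\,\di y-\bigl(b\cos v_1+\ell_1\bigr)\di v_2-\ell_1\,\di v_1.
\end{align*}
The signs depend on the orientation conventions of the figure, so I will fix them so that they reduce to the standard case at $\ell_1=\ell_3=b=\tfrac12$, after rescaling. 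These forms are linearly independent everywhere, because their $\di v_1$, $\di v_3$ and $(\di x,\di y)$ parts are independent. Hence their common kernel is exactly the rank~$2$ distribution $\mc{D}$.

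Next I check that $\varphi_i(X_j)=0$ for $i=1,2,3$ and $j=1,2$.
\begin{itemize}
\item For $X_1$: the conditions $\varphi_1(X_1)=0$ and the terms $\cos v_2\,\partial_x+\sin v_2\,\partial_y$ are trivial. For $\varphi_3(X_1)$ we get $(b\cos v_1+\ell_1)-\ell_1\bigl(1+\tfrac{b}{\ell_1}\cos v_1\bigr)=0$, and $\varphi_2$ is handled symmetrically.
\item For $X_2$: we have $\varphi_1(X_2)=-\sin v_2\cos v_2+\cos v_2\sin v_2=0$. For $\varphi_3(X_2)$ we get $-\sin(v_1+v_2)\cos v_2+\cos(v_1+v_2)\sin v_2+\ell_1\cdot\tfrac{1}{\ell_1}\sin v_1=-\sin v_1+\sin v_1=0$, and $\varphi_2$ is analogous.
\end{itemize}
This can also be confirmed in Maple as in \cite{attached_files}. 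Linear independence of $X_1,X_2$ at every point is immediate, since only $X_1$ has a $\partial_{v_2}$ component and only $X_2$ has $\partial_x,\partial_y$ components. Therefore $X_1,X_2$ form a basis of $\mc{D}$.

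For the growth vector I compute $X_3=[X_1,X_2]$, $X_4=[X_1,X_3]$ and $X_5=[X_2,X_3]$ symbolically with $\ell_1,\ell_3,b$ as parameters, and then the determinant $\Delta(v_1,v_3;\ell_1,\ell_3,b)$ of the $5\times5$ coefficient matrix. By rotation and translation invariance, $\Delta$ depends only on $v_1,v_3$. The growth $(2,3,5)$ at generic points is then equivalent to $\Delta\not\equiv0$ as a function of $(v_1,v_3)$ for every admissible parameter triple. $\Delta$ is a trigonometric polynomial in $v_1,v_3$, so it is nonzero on an open dense set as soon as it is not identically zero. The fact that $\mc{D}^2$ has rank~$3$ follows along the way, since $X_3$ has nonzero $\partial_{v_2}$-free components.

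The main obstacle is proving $\Delta\not\equiv 0$ uniformly in $(\ell_1,\ell_3,b)$, including the boundary values $b\in\{0,1\}$. I would first evaluate $\Delta$ at, or expand it near, a convenient configuration such as $v_1=v_3=\pi/2$, or expand it in Fourier modes in $v_1,v_3$. The goal is to find a coefficient that is manifestly nonzero, for instance a monomial like $1/(\ell_1\ell_3)^k$ times a positive constant that does not depend on $b$. If a single coefficient vanishes for some special $b$, I would split into cases and exhibit another coefficient that is nonzero there.
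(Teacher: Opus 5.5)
Your proposal is correct and follows the paper's proof. It uses the same three non-slipping one-forms, checks that $X_1,X_2$ annihilate them, and then shows $\det(X_1,\dots,X_5)\not\equiv 0$ by evaluating at a point, which the paper delegates to Maple. The step you flag as the main obstacle closes without any case split: expanding along the $\partial_{v_2}$-row and the rotated $(\partial_x,\partial_y)$-rows reduces the determinant, up to sign, to $(\ell_1\ell_3)^{-3}\bigl[b^2\sin v_1\,(\ell_3+(1-b)\cos v_3)+(1-b)^2\sin v_3\,(\ell_1+b\cos v_1)\bigr]$, and at your suggested point $v_1=v_3=\pi/2$ this equals $\pm(b^2\ell_3+(1-b)^2\ell_1)/(\ell_1\ell_3)^3\neq 0$ for all $\ell_1,\ell_3>0$, $b\in[0,1]$.
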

\begin{proof}
    Similarly as in the proof of Proposition \ref{prop:generators_standard}, the non-slipping realizes as
    {\footnotesize
    \begin{align*}
        \tfrac{\di}{\di t} (x,y) &\perp (-\sin v_2, \cos v_2),\\
        \tfrac{\di}{\di t} \bigl( (x,y) + (1-b) (\cos v_2, \sin v_2) + \ell_3 (\cos(v_3 + v_2), \sin (v_3 + v_2)) \bigr) &\perp (-\sin (v_3+v_2), \cos (v_3 + v_2)),\\
        \tfrac{\di}{\di t} \bigl( (x,y) - b (\cos v_2, \sin v_2) - \ell_1 (\cos(v_1 + v_2), \sin (v_1 + v_2)) \bigr) &\perp (-\sin (v_1+v_2), \cos (v_1 + v_2)).
    \end{align*}}%
    These conditions correspond to $\langle Y,\varphi_1 \rangle = \langle Y,\varphi_2 \rangle = \langle Y,\varphi_3 \rangle = 0$ for any vector field $Y \in \mc{D}$ and the one-forms
    \begin{align*}
        \varphi_1 &= -\sin v_2\, \di x + \cos v_2\, \di y,\\
        \varphi_2 &= -\sin(v_3 + v_2)\, \di x + \cos(v_3 + v_2)\, \di y + \big((1-b)\cos v_3 + \ell_3 \big)\, \di v_2 + \ell_3\, \di v_3,\\
        \varphi_3 &= -\sin(v_1 + v_2)\, \di x + \cos(v_1 + v_2)\, \di y - \ell_1\, \di v_1 - (b \cos v_1 + \ell_1)\, \di v_2.
    \end{align*}

    These three one-forms are linearly independent. By direct computation, see \cite[\texttt{vectorfields.mw}]{attached_files}, we find that the vector fields $X_1,X_2$ annihilate these one-forms.

    Let us express few Lie bracket iterations of $X_1,X_2$,
    \begin{equation}\label{VF345}
        \begin{aligned}
            X_3 &= \frac{-b - \ell_1\cos v_1}{\ell_1^2}\, \partial_{v_1} + \frac{1-b + \ell_3 \cos v_3}{\ell_3^2}\, \partial_{v_3} + \sin v_2\, \partial_x - \cos v_2\, \partial_y,\\
            X_4 &= \frac{(\ell_1^2 - b^2) \sin v_1}{\ell_1^3} \partial_{v_1} + \frac{\left((1-b)^2 - \ell_3^2 \right) \sin v_3}{\ell_3^3} \partial_{v_3} - \cos v_2\, \partial_x - \sin v_2\, \partial_y,\\
            X_5 &= \frac{-\ell_1 - b\cos v_1}{\ell_1^3}\, \partial_{v_1} - \frac{\ell_3 + (1-b) \cos v_3}{\ell_3^3}\, \partial_{v_3}.
        \end{aligned}
    \end{equation}
    Substituting any generic point $u\in \mc{M}$, we find the vectors $X_1(u), \dots, X_5(u)$ linear independent, see \cite[\texttt{vectorfields.mw}]{attached_files}, hence the distribution $\mc{D}$ is bracket-generating with the growth vector $(2,3,5)$ for any $\ell_1,\ell_3>0$, $b \in [0,1]$ on a Zariski open subset of $\mc{M}$.
\end{proof}

Therefore, the distribution $\mc{D}$ has the growth vector $(2,3,5)$ in any generic point of $\mc{M}$ and
\begin{equation}\label{frame}
    X_1,\quad X_2,\quad X_3 = [X_1, X_2], \quad X_4 = [X_1,X_3], \quad X_5 = [X_2,X_3]
\end{equation}
form a frame of $T\mc{M}$.
\begin{lemma} \label{lemma:symmetry}
    The distributions $\mc{D}_{\ell_1,\ell_3,b}$ are equivalent to $\mc{D}_{\ell_3,\ell_1,1-b}$, i.e.~there is an automorphism $\phi$ on $\mc{M}$ such that $\phi_* (\mc{D}_{\ell_1,\ell_3,b}) = \mc{D}_{\ell_3,\ell_1,1-b}$.
\end{lemma}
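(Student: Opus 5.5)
The idea is geometric: reversing the snake swaps the roles of the first and third segments. With $\ell_2=1$ and the central wheel at relative position $b$, the reversed snake has its central wheel at $1-b$ from the new ``back'' joint. I would realize this by an explicit diffeomorphism of $\mc{M}$ that relabels the angles and turns the central segment around:
\[
\phi(v_1,v_2,v_3,x,y) = (v_3,\, v_2+\pi,\, v_1,\, x,\, y).
\]
The central wheel position $(x,y)$ is unchanged. The new direction of the central segment is $v_2+\pi$, and the relative angles of the adjacent segments are exchanged. Measured from the reversed direction $v_2+\pi$, the absolute direction $v_2+v_3+\pi$ of the old third segment becomes the new first segment's direction. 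A possible sign flip $v_i\mapsto -v_i$ may be needed, depending on the orientation conventions of Figure~\ref{fig:snake_general}. I would fix these signs by checking against the formulas \eqref{generators}.

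The verification uses the basis from the previous Proposition. I would compute $\phi_*X_1$ and $\phi_*X_2$ for the fields of $\mc{D}_{\ell_1,\ell_3,b}$ and compare them with the fields $X_1',X_2'$ of \eqref{generators} for the parameters $(\ell_3,\ell_1,1-b)$. For $\phi$ as above, $\phi_*$ sends $\partial_{v_1}\mapsto\partial_{v_3}$, $\partial_{v_3}\mapsto\partial_{v_1}$, $\partial_{v_2}\mapsto\partial_{v_2}$, and fixes $\partial_x,\partial_y$. The coefficient of $X_2$ along $\cos v_2\,\partial_x+\sin v_2\,\partial_y$ acquires a factor $-1$, since $\cos(v_2+\pi)=-\cos v_2$ and $\sin(v_2+\pi)=-\sin v_2$. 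Thus I would expect $\phi_*X_2 = -X_2'$ after the sign conventions on the $v_1,v_3$ components are matched. Similarly, $\phi_*X_1$ should equal $\pm X_1'$, because the coefficients $1+\frac{b}{\ell_1}\cos v_1$ and $1+\frac{1-b}{\ell_3}\cos v_3$ exchange exactly into those for $(\ell_3,\ell_1,1-b)$. Since $\phi_*$ maps the basis to $\pm$ the other basis, $\phi_*\mc{D}_{\ell_1,\ell_3,b}=\mc{D}_{\ell_3,\ell_1,1-b}$.

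The main obstacle is getting the signs right. Under the naive swap, $\phi_*X_1$ has $\partial_{v_2}$ coefficient $-1$ but $\partial_{v_1},\partial_{v_3}$ coefficients $+(1+\dots)$. If the reversed snake also requires $v_i\mapsto -v_i$, the $\partial_{v_i}$ components flip sign while $\partial_{v_2}$ does not, and the match fails. In that case I would try $\phi(v_1,v_2,v_3,x,y)=(-v_3,\,-v_2,\,-v_1,\,x,\,-y)$ (a reflection) or its composition with the $\pi$-rotation. I would test the candidates against \eqref{generators} and, if needed, against the annihilating one-forms $\varphi_1,\varphi_2,\varphi_3$, checking that $\phi^*$ maps the span of the forms for $(\ell_3,\ell_1,1-b)$ onto the span of those for $(\ell_1,\ell_3,b)$. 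A reflection is acceptable here, since only a diffeomorphism is required. Once the correct candidate is found, the check is a direct computation, which could be confirmed in Maple as in \cite[\texttt{vectorfields.mw}]{attached_files}.
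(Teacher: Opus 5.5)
Your proposal is correct and is essentially the paper's argument. Your map $\phi(v_1,v_2,v_3,x,y)=(v_3,v_2+\pi,v_1,x,y)$ works exactly as written: it gives $\phi_*X_1=X_1'$ and $\phi_*X_2=-X_2'$, so no extra sign flips are needed and the fallback reflections are unnecessary. The paper instead uses $(v_3,v_2,v_1,-x,-y)$, which differs from yours only by the rotation $(v_1,v_2,v_3,x,y)\mapsto(v_1,v_2+\pi,v_3,-x,-y)$ of the whole robot; that rotation fixes $X_1$ and $X_2$ for every $\mc{D}_{\ell_1,\ell_3,b}$, so both maps give the same relations $X_1\mapsto X_1$, $X_2\mapsto -X_2$.
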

\begin{proof}
    Intuitively, one can see it directly from Figure \ref{fig:snake_general} and easily verify using the expressions \eqref{generators} that the push-forward of
    \[ \phi\big((v_1,v_2,v_3,x,y)\big) = (v_3,v_2,v_1,-x,-y) \]
    maps the basis $X_1, X_2 \in \mc{D}_{\ell_1, \ell_3, b}$ to the basis $X_1, -X_2 \in \mc{D}_{\ell_3, \ell_1, 1-b}$.
\end{proof}
\begin{observation}
Both vector fields in \eqref{generators} are of the form
\begin{equation}\label{VF_closed_form}
\begin{aligned}
    (a_1 \cdot \cos v_1 &+ a_2 \cdot \sin v_1 + a_3) \,\partial_{v_1} + a_4 \,\partial_{v_2} + (a_5 \cdot \cos v_3 + a_6 \cdot \sin v_3 + a_7) \,\partial_{v_3}\\
    &+ (a_8 \cdot \cos v_2 + a_9 \sin v_2) \,\partial_x + (a_{10} \cdot \cos v_2 + a_{11} \sin v_2) \,\partial_y,
\end{aligned}
\end{equation}
where $a_i$ are real parameters.
\end{observation}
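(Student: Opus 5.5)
The statement is a direct inspection of the two fields in \eqref{generators} against the template \eqref{VF_closed_form}. I would match coefficients: for $X_1$ take
\[ a_1 = \tfrac{b}{\ell_1},\quad a_2 = 0,\quad a_3 = 1,\quad a_4 = -1,\quad a_5 = \tfrac{1-b}{\ell_3},\quad a_6 = 0,\quad a_7 = 1,\]
and set $a_8 = \dots = a_{11} = 0$. For $X_2$ take
\[ a_1 = 0,\quad a_2 = -\tfrac{1}{\ell_1},\quad a_3 = 0,\quad a_4 = 0,\quad a_5 = 0,\quad a_6 = \tfrac{1}{\ell_3},\quad a_7 = 0,\]
together with $a_8 = 1$, $a_9 = 0$, $a_{10} = 0$, $a_{11} = 1$. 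These constants depend only on the fixed parameters $\ell_1,\ell_3,b$, so they are real numbers, as the statement requires.

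The point to check carefully is that the template is respected exactly, not only approximately. The $\partial_{v_1}$ coefficient must be an affine combination of $\cos v_1,\sin v_1,1$ in $v_1$ alone. The $\partial_{v_3}$ coefficient must likewise involve $v_3$ only. The $\partial_{v_2}$ coefficient must be constant. The $\partial_x,\partial_y$ coefficients must be linear combinations of $\cos v_2,\sin v_2$ with no constant term. Reading off \eqref{generators} confirms each of these.

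There is no genuine obstacle here. I would add a remark that anticipates the use of this observation. The space $V$ of fields of the form \eqref{VF_closed_form} is $11$-dimensional. Bracketing two such fields keeps the $v_1$-part within the span of $\{1,\cos v_1,\sin v_1\}\partial_{v_1}$, since this span is a copy of $\mathfrak{sl}_2$-type vector fields closed under brackets, and the same holds for $v_3$. The $(x,y)$-part is also preserved: the brackets $a_4\partial_{v_2}$ with $\cos v_2\,\partial_x$ and similar terms stay in the span of $\cos v_2,\sin v_2$ times $\partial_x,\partial_y$. Hence $V$ is a finite-dimensional Lie algebra. This is consistent with \eqref{VF345}, where $X_3,X_4,X_5$ are again of the form \eqref{VF_closed_form}. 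It is the structural reason the generated subalgebra will be finite dimensional.
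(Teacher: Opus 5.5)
Your coefficient matching is correct for both $X_1$ and $X_2$, and it is exactly the direct inspection the paper relies on; the paper gives no further argument for this observation. Your closing remark is not needed for the statement, but it is also correct: it anticipates Lemma~\ref{lemma:Lie_bracket_closed}, which the paper checks by expanding the $\partial_{v_1}$-component trigonometrically rather than by recognizing $\on{span}\{1,\cos v_1,\sin v_1\}\,\partial_{v_1}$ as a copy of $\mfk{sl}_2$.
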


\begin{lemma} \label{lemma:Lie_bracket_closed}
    The set of vector fields of the form \eqref{VF_closed_form} is closed under the Lie bracket of vector fields, hence it is an $11$-dimensional Lie subalgebra $\mfk{u}$ of $\mfk{X}(\mc{M})$.
\end{lemma}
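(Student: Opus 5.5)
The plan is to prove the closure by a direct but structured computation. Write a general field of the form \eqref{VF_closed_form} as
\[
Y = f(v_1)\,\partial_{v_1} + c\,\partial_{v_2} + g(v_3)\,\partial_{v_3} + h(v_2)\,\partial_x + k(v_2)\,\partial_y,
\]
with $f,g \in V := \operatorname{span}\{1,\cos,\sin\}$, $c \in \R$ and $h,k \in W := \operatorname{span}\{\cos,\sin\}$. The set of such $Y$ is a real vector space. Its dimension is $3+1+3+2+2 = 11$, because the functions $1,\cos,\sin$ are linearly independent and each coefficient function depends only on the indicated single variable. So it suffices to show that $[Y,Y'] $ has the same form for any two such fields $Y,Y'$.

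By bilinearity and antisymmetry, I will expand $[Y,Y']$ into brackets of the five component types. For a bracket $[\alpha(s)\,\partial_s,\ \beta(t)\,\partial_t]$ with $s\neq t$, the result is $\alpha\,\partial_s\beta\,\partial_t - \beta\,\partial_t\alpha\,\partial_s$. This term vanishes unless the coefficient of one field depends on the variable differentiated by the other. Going through the pairs:
\begin{itemize}
\item The $\partial_{v_1}$ part commutes with the $\partial_{v_2}$, $\partial_{v_3}$, $\partial_x$ and $\partial_y$ parts, since none of those coefficients involves $v_1$ and $f$ involves only $v_1$.
\item The same holds for the $\partial_{v_3}$ part.
\item The $\partial_x$ and $\partial_y$ parts commute with each other, because their coefficients depend only on $v_2$.
\end{itemize}
Only two kinds of nonzero contributions remain.

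The first kind is the self-bracket $[f\,\partial_{v_1}, f'\,\partial_{v_1}] = (f f'' - f' f)\,\partial_{v_1}$, where the derivatives are with respect to $v_1$ (and likewise for $v_3$). I will check that $V$ is closed under $(f,\tilde f)\mapsto f\tilde f' - \tilde f f'$ on basis elements:
\[
[1,\cos]\mapsto -\sin,\qquad [1,\sin]\mapsto \cos,\qquad [\cos,\sin]\mapsto \cos^2+\sin^2 = 1 .
\]
These all lie in $V$; this is the familiar $\mathfrak{sl}(2,\R)$ of projective vector fields on the circle.

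The second kind is $[c\,\partial_{v_2}, h\,\partial_x] = c\,h'(v_2)\,\partial_x$, and likewise for $\partial_y$. Here $W$ is stable under differentiation, since $\cos' = -\sin$ and $\sin' = \cos$. The constant coefficient of $\partial_{v_2}$ is never differentiated, so no bracket produces a nonconstant $\partial_{v_2}$ term.

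Collecting these, $[Y,Y']$ again has the form \eqref{VF_closed_form}. The subspace is therefore an $11$-dimensional Lie subalgebra $\mfk{u}\subset\mfk{X}(\mc{M})$. There is no real obstacle in this argument. The only points needing care are the trigonometric identity in the $[\cos,\sin]$ bracket and making sure that no cross term slips through; for the latter I will tabulate the dependencies explicitly, as in the list above.
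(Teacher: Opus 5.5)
Your proof is correct and takes essentially the paper's route. Both are a direct bracket computation whose key step is that the $\partial_{v_1}$ (and $\partial_{v_3}$) coefficient $f\tilde f' - \tilde f f'$ stays in $\operatorname{span}\{1,\cos,\sin\}$; the paper does this with general coefficients $a_i,b_i$, while you work on basis elements and also write out the cross terms, the $\partial_x,\partial_y$ components and the dimension count, which the paper calls obvious (just avoid using $f'$ both for the second field's coefficient and for a derivative).
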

\begin{proof}
    Let us proof the claim for the $\partial_{v_1}$-term, the proof for $\partial_{v_3}$-term is the same and for the $\partial_{v_2}, \partial_{x}, \partial_{y}$-terms is obvious.

    Consider two vector fields $Y_1$ and $Y_2$ of the form \eqref{VF_closed_form} with the  coefficients $a_i$ and $b_i$.
    \begin{align*}
        \di v_1 ([Y_1,Y_2])
        &= (a_1 \cos v_1 + a_2 \sin v_1 + a_3) \cdot \partial_{v_1}(b_1 \cos v_1 + b_2 \sin v_1 + b_3)\\
        &\qquad - (b_1 \cos v_1 + b_2 \sin v_1 + b_3) \cdot \partial_{v_1}(a_1 \cos v_1 + a_2 \sin v_1 + a_3)\\
        &= (a_1 \cos v_1 + a_2 \sin v_1 + a_3) \cdot (-b_1 \sin v_1 + b_2 \cos v_1)\\
        &\qquad - (b_1 \cos v_1 + b_2 \sin v_1 + b_3) \cdot (-a_1 \sin v_1 + a_2 \cos v_1)\\
        &= \bigl( (-a_1 b_1 + a_2 b_2) \sin v_1 \cos v_1 - a_2 b_1 \sin^2 v_1 + a_1 b_2 \cos^2 v_1\\
        &\qquad\qquad - a_3 b_1 \sin v_1 + a_3 b_2 \cos v_1) \bigr) \\
        &\qquad - \bigl( (-b_1 a_1 + b_2 a_2) \sin v_1 \cos v_1 - b_2 a_1 \sin^2 v_1 + b_1 a_2 \cos^2 v_1\\
        &\qquad\qquad - b_3 a_1 \sin v_1 + b_3 a_2 \cos v_1) \bigr) \\
        &= 0 \sin v_1 \cos v_1 - a_2 b_1 (\sin^2 v_1 + \cos^2 v_1) + a_1 b_2 (\cos^2 v_1 + \sin^2 v_1)\\
        &\qquad\qquad + (-a_3 b_1 + b_3 a_1) \sin v_1 + (a_3 b_2 - b_3 a_2) \cos v_1\\
        &= (-a_3 b_1 + b_3 a_1) \sin v_1 + (a_3 b_2 - b_3 a_2) \cos v_1 + (-a_2 b_1 + a_1 b_2).
        \qedhere
    \end{align*}
\end{proof}

In a similar manner to the definition of growth vectors for distributions \eqref{growth_vector}, we define growth vectors for Lie algebras.
\begin{definition}
    Consider a Lie algebra $\mfk{g}$ and a set of its elements $\mfk{d}$. Denote
    \begin{equation}
        \mfk{d}^1 = \mfk{d},
        \quad \mfk{d}^{i+1} = \mfk{d}^i \cup \{ [X,Y] \mid X\in \mfk{d}, Y \in \mfk{d}^i \},
        \quad \mfk{D}^{i} = \on{span}_\R \mfk{d}^i \subset \mfk{X}(\mc{M}),
    \end{equation}
    and $r \in \N$ the least number such that $\mfk{D}^r = \mfk{D}^{r+1}$. For short, we will denote vector fields 
    obtained by iterated Lie brackets by
    \begin{equation} \label{LB_shortcut}
        X_{i_1 i_2\dots i_{k-1} i_k} \coloneqq [X_{i_1}, [X_{i_2}, [\cdots [X_{i_{k-1}}, X_{i_k}]\cdots]]].
    \end{equation}
    
    We call the sequence
    \begin{equation} \label{growth_vector_by_VF}
        (\dim \mfk{D}^1, \dots, \dim \mfk{D}^r)
    \end{equation}
    \emph{the growth vector of} $\mfk{g}$ \emph{with respect to} $\mfk{d}$.
\end{definition}

\subsection{Branching}

For $(\ell_1,\ell_3,b) \in \R^+ \times \R^+ \times [0,1]$, any Lie algebra $\mfk{s}_{\ell_1,\ell_3,b}$ generated by $X_1,X_2 \in \mc{D}_{\ell_1,\ell_3,b}$ is the subalgebra of the Lie algebra
\[ \mfk{u} = \{ X \in \mfk{X} (\mc{M}) \mid X\ \text{is of the form \eqref{VF_closed_form}}\}, \]
therefore $\dim \mfk{s}_{\ell_1,\ell_3,b} \leq \dim \mfk{u} = 11$.

We are going to characterize all the Lie subalgebras $\mfk{s}_{\ell_1,\ell_3,b} \subset \mfk{u}$ in terms of our choice of basis vector fields. We are not interested whether some of the Lie algebras $\mfk{s}_{\ell_1,\ell_3,b}$ are isomorphic, our intention is to find a multiplication table for families of $\mfk{s}_{\ell_1,\ell_3,b}$ that would depend on parameters $\ell_1,\ell_3,b$ in order to reproduce the procedure of Section~\ref{section:std_snake}.

Let $\mfk{d} = \{X_1,X_2\}$ and $\mfk{d}^i, \mfk{D}^i$, $r$ be defined as in the Definition above. Using the notation \eqref{LB_shortcut}, we fix the vector fields from $\mfk{d}^i$ in the following order
\begin{equation} \label{VF_order}
    \underbrace{X_1,\dots,X_5}_{\mfk{d}^3}, \underbrace{X_{14} ,X_{25}}_{\mfk{d}^4\setminus \mfk{d}^3}, \underbrace{X_{114}, X_{214}, X_{125}, X_{225}}_{\mfk{d}^5 \setminus \mfk{d}^4}, \underbrace{X_{1114}, X_{2214}, X_{1125}, X_{2225}}_{\mfk{d}^6 \setminus \mfk{d}^5}, \dots.
\end{equation}
In \eqref{VF_order}, we omit the following vector fields, since we directly compute that they vanish, see \cite[\texttt{vectorfields.mw}]{attached_files}.
\[ 0 = X_{15} = X_{24} = X_{2114} = X_{1214} = X_{1225} = X_{2125}. \]

The vector fields \eqref{VF_order} generate the Lie algebra $\mfk{s}_{\ell_1, \ell_3, b}$ and we want to choose a basis from them. We proceed from left to right in that order and we determine for which parameters the current vector field is linearly independent on the previously selected independent ones. In each step, it splits the set of parameters $\ell_1, \ell_3, b$ by the assumed dependence or independence into branches, and we gradually find bases of vector spaces
\[\on{span}_\R \mfk{d} = \mfk{D}^1 \subset \dots \subset \mfk{D}^r = \mfk{D}^{r+1} = \mfk{s}_{\ell_1,\ell_3,b} \]
for each branch. In other words, we form a matrix with $a_i$ coordinates \eqref{VF_closed_form} of $X_1,\dots, X_5, X_{14}, \dots$ in columns and we determine its rank with respect to parameters $\ell_1, \ell_3, b$.

We start with parameters $(\ell_1,\ell_3,b) \in \R^+ \times \R^+ \times [0,1]$. We already know that
\[ \mfk{D}^3 = \on{span}_\R \mfk{d}^3 = \on{span}_\R \{X_1, X_2, \dots, X_5\},\qquad \dim \mfk{D}^3 = 5,\]
and we continue with vector fields $X_{14}, X_{25} \in \mfk{d}^4 \setminus \mfk{d}^3$. The vector field $X_{14}$ is always linearly independent on $\mfk{d}^3$. Considering $X_{25}$, the set of parameters $\R^+ \times \R^+ \times [0,1]$ splits into branches with basis of $\mfk{D}^4$ formed by
\begin{enumerate}[label=\arabic*)]
    \item $X_1, \dots, X_5, X_{14}$ for $b=\frac{1}{2}$, $\ell_1$ and $\ell_3$ arbitrary \label{branch1}
    \item $X_1, \dots, X_5, X_{14}$ for $\ell_1 = b = 1 - \ell_3$, $b\ne0,1$ arbitrary \label{branch2}
    \item $X_1, \dots, X_5, X_{14}, X_{25}$ otherwise. \label{branch3}
\end{enumerate}

Further, consider
\[ \mfk{d}^5 \setminus \mfk{d}^4 = \{X_{114}, X_{214}, X_{125}, X_{225}\}. \]
For branches \ref{branch1} and \ref{branch2}, we only need to distinguish linear dependency of $X_{114}, X_{214}$, the vector fields $X_{125}, X_{225}$ will not appear since $X_{25}$ is already linearly dependent on the chosen basis of $\mfk{D}^4$. Regarding branch \ref{branch3}, we need to check all vector fields from $\mfk{d}^5 \setminus \mfk{d}^4$. We find that the basis of branch \ref{branch2} is closed with respect to Lie bracket, $\mfk{D}^4 = \mfk{D}^5$, hence it is a basis of $\mfk{s}_{\ell_1, \ell_3, b}$. Continuing, we obtain several new branches, all but one are closed for $\mfk{D}^5$, the last one is closed for $\mfk{D}^6$. The final bases of $\mfk{s}_{\ell_1, \ell_3, b}$ for all branches are shown schematically in Figure~\ref{fig:branches}.

\begin{figure}[hbt]
    \centering
    \begin{equation*}
        X_1, \dots, X_{14}:
        \begin{cases}
            {b = \frac{1}{2}}& :
            \begin{cases}
                {\ell_1 = \ell_3}& :\emptyset\\
                {\ell_1 \neq \ell_3}& :X_{114}, X_{214} :
                \begin{cases}
                    {\ell_1 = \frac{1}{2}}& :\emptyset\\
                    {\ell_3 = \frac{1}{2}}& :\emptyset\\
                    {\text{otherwise}}& :X_{1114}
                \end{cases}
            \end{cases}\\
            {\ell_1 = b = 1 - \ell_3}& :\emptyset\\
            {\text{otherwise}}& :X_{25} :
            \begin{cases}
                {b=1}& :
                \begin{cases}
                    {\ell_1 = 1}& :X_{125}\\
                    {\ell_1 \neq 1}& :X_{214}, X_{125}
                \end{cases}\\
                {b = 0}& :
                \begin{cases}
                    {\ell_3 = 1}& :X_{125}\\
                    {\ell_3 \neq 1}& :X_{214}, X_{125}
                \end{cases}\\
                {b = \frac{\ell_1}{\ell_1 + \ell_3}}& :X_{125}, X_{225}\\
                
                {\text{otherwise}}& :X_{114} :
                \begin{cases}
                    {b = \ell_1}& :\emptyset\\
                    {b = 1 - \ell_3}& :\emptyset\\
                    {\text{otherwise}}& :X_{214}
                \end{cases}
            \end{cases}
        \end{cases}
    \end{equation*}%
    \caption{\centering Basis of $\mfk{s}_{\ell_1, \ell_3, b}$ by branches of parameters.}
    \label{fig:branches}
\end{figure}

Below, we list the bases of $\mfk{s}_{\ell_1,\ell_3,b}$ for each branch together with their growth vector with respect to $\{X_1,X_2\}$, defined by\eqref{growth_vector_by_VF}. Recall the considered range of parameters
\[P = \R^+ \times \R^+ \times [0,1].\]
The following branches are ordered such that a branch with $X_\alpha$ linearly dependent comes before branches with $X_\alpha$ contained in their bases.
We reduce the number of branches pictured in Figure~\ref{fig:branches} by the equivalence $(\ell_1, \ell_3,b) \sim (\ell_3, \ell_1, 1-b)$ from Lemma \ref{lemma:symmetry}.
\begin{enumerate}[label=\arabic*)]
    \item $\mfk{s}_{\ell_1,\ell_3,b} = \langle X_1, \dots, X_{14} \rangle$ with the growth vector $(2,3,5,6)$ for
        \begin{equation*}
        \begin{aligned}
            a)&\quad \left\{ (\ell_1,\ell_3,b) \in P \mid b = \tfrac{1}{2},\ \ell_1 = \ell_3 \right\}\\
            b)&\quad \left\{ (\ell_1,\ell_3,b) \in P \mid \ell_1 = b = 1 - \ell_3 \right\}.
        \end{aligned}
        \end{equation*}\label{branch_list_1}
    \item $\mfk{s}_{\ell_1,\ell_3,b} = \langle X_1, \dots, X_{14}, X_{114}, X_{214} \rangle$ with the growth vector $(2,3,5,6,8)$ for
        \begin{equation*}
            \left\{ (\ell_1,\ell_3,b) \in P \mid b = \ell_1 = \tfrac 1 2 \neq \ell_3 \right\}.
        \end{equation*}
        The equivalent branch is
        \[ \left\{ (\ell_1,\ell_3,b) \in P \mid b = \ell_3 = \tfrac 1 2 \neq \ell_1 \right\}. \]
    \item $\mfk{s}_{\ell_1,\ell_3,b} = \langle X_1, \dots, X_{14}, X_{114}, X_{214}, X_{1114} \rangle$ with the growth vector $(2,3,5,6,8,9)$ for
        \begin{equation*}
            \left\{ (\ell_1,\ell_3,b) \in P \mid b = \tfrac{1}{2},\ \ell_1 \neq \ell_3,\ \ell_1\ne b,\ \ell_3\ne b \right\}.
        \end{equation*}
    \item $\mfk{s}_{\ell_1,\ell_3,b} = \langle X_1, \dots, X_{14}, X_{25}, X_{125} \rangle$ with the growth vector $(2,3,5,7,8)$ for
        \begin{equation*}
            \left\{ (\ell_1,\ell_3,b) \in P \mid b = \ell_1 = 1 \right\}.
        \end{equation*}
        The equivalent branch is
        \[ \left\{ (\ell_1,\ell_3,b) \in P \mid b = 0,\ \ell_3 = 1 \right\}. \]
    \item $\mfk{s}_{\ell_1,\ell_3,b} = \langle X_1, \dots, X_{14}, X_{25}, X_{125}, X_{225} \rangle$ with the growth vector $(2,3,5,7,9)$ for
        \begin{equation*}
            \left\{ (\ell_1,\ell_3,b) \in P \mid b = \tfrac{\ell_1}{\ell_1+\ell_3},\ \ell_1 \neq \ell_3,\ \ell_1 + \ell_3 \neq 1 \right\}.
        \end{equation*}
    \item $\mfk{s}_{\ell_1,\ell_3,b} = \langle X_1, \dots, X_{14}, X_{25}, X_{214}, X_{125} \rangle$ with the growth vector $(2,3,5,7,9)$ for
        \begin{equation*}
            \left\{ (\ell_1,\ell_3,b) \in P \mid b = 1, \ell_1 \neq 1 \right\}.
        \end{equation*}
        The equivalent branch is
        \[ \left\{ (\ell_1,\ell_3,b) \in P \mid b = 0,\ \ell_3 \neq 1 \right\}. \]
    \item $\mfk{s}_{\ell_1,\ell_3,b} = \langle X_1, \dots, X_{14}, X_{25}, X_{114} \rangle$ with the growth vector $(2,3,5,7,8)$ for
        \begin{equation*}
            \left\{ (\ell_1,\ell_3,b) \in P \mid b = \ell_1,\ \ell_1 + \ell_3 \neq 1,\ b \neq \tfrac{1}{2}, 1 \right\}.
        \end{equation*}
        The equivalent branch is
        \[ \left\{ (\ell_1,\ell_3,b) \in P \mid b = 1 - \ell_3,\ \ell_1 + \ell_3 \neq 1,\ b \neq 0, \tfrac 1 2 \right\}. \]
    \item $\mfk{s}_{\ell_1,\ell_3,b} = \langle X_1, \dots, X_{14}, X_{25}, X_{114}, X_{214} \rangle$ with the growth vector $(2,3,5,7,9)$ for
        \begin{equation*}\label{item:branch_generic}
            \quad \left\{ (\ell_1,\ell_3,b) \in P \mid b \neq 0,\tfrac{1}{2},1,\ell_1,1 - \ell_3,\tfrac{\ell_1}{\ell_1+\ell_3} \right\}.
        \end{equation*} 
        This branch is the generic case.
\end{enumerate}

\begin{proposition}\label{prop:branches}
    The system of vector fields \eqref{generators} generates a finite dimensional Lie subalgebra $\mfk{s}_{\ell_1,\ell_3,b}$ of $\mathfrak{X}(\mc{M})$ over the field $\R$. Depending on the parameters $\ell_1,\ell_3,b$, the Lie algebra $\mfk{s}_{\ell_1,\ell_3,b}$ has dimension $6$, $8$ or $9$. Basis of $\mfk{s}_{\ell_1,\ell_3,b}$ for any parameters $(\ell_1, \ell_3, b)$ belong to one branch listed above or to the equivalent one.
\end{proposition}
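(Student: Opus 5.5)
Finite dimensionality comes for free from Lemma~\ref{lemma:Lie_bracket_closed}. Both $X_1$ and $X_2$ in \eqref{generators} are of the form \eqref{VF_closed_form}, so they lie in the $11$-dimensional Lie algebra $\mfk{u}$. Hence the Lie algebra $\mfk{s}_{\ell_1,\ell_3,b}$ they generate is a subalgebra of $\mfk{u}$ with $\dim \mfk{s}_{\ell_1,\ell_3,b}\le 11$. The remaining content of the proposition is the exact dimension and basis in terms of the parameters.

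To get these, I identify each vector field of the form \eqref{VF_closed_form} with its coefficient vector $(a_1,\dots,a_{11})\in\R^{11}$. By the computation in the proof of Lemma~\ref{lemma:Lie_bracket_closed}, the bracket becomes an explicit bilinear map on $\R^{11}$ whose structure constants do not depend on the point of $\mc{M}$. I then compute the coefficient vectors of the iterated brackets in the order \eqref{VF_order}; their entries are rational functions of $\ell_1,\ell_3,b$. The filtration $\mfk{D}^1\subset\mfk{D}^2\subset\cdots$ stabilizes as soon as $\mfk{D}^{i}=\mfk{D}^{i+1}$, because $\mfk{D}^{i+1}$ is spanned by $\mfk{D}^i$ and brackets of $X_1,X_2$ with it.

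It therefore suffices to do the following in each step. First, determine the rank of the $11\times k$ matrix of coefficient vectors as a function of the parameters. Second, check that the brackets of $X_1,X_2$ with the new basis elements reduce to the current span. The vanishing identities $X_{15}=X_{24}=X_{2114}=\dots=0$ cut down the list of brackets that need checking. The rank conditions are given by vanishing of minors, which after clearing denominators are polynomials in $\ell_1,\ell_3,b$. Factoring these polynomials produces exactly the loci $b=\tfrac12$, $\ell_1=\ell_3$, $\ell_1=b$, $b=1-\ell_3$, $b\in\{0,1\}$, $b=\tfrac{\ell_1}{\ell_1+\ell_3}$, $\ell_1=1$, and so on, as recorded in Figure~\ref{fig:branches}.

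Next, I walk the tree of Figure~\ref{fig:branches}. On each leaf I verify closure, meaning that every bracket of $X_1,X_2$ with the basis is a linear combination of the basis with coefficients depending only on the parameters. This also shows the leaves exhaust $P$, since at each node the case split is complementary: a given minor either vanishes or does not.

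Finally, I use Lemma~\ref{lemma:symmetry} to identify the mirrored branches. The involution $(\ell_1,\ell_3,b)\mapsto(\ell_3,\ell_1,1-b)$ sends $X_1,X_2$ to $X_1,-X_2$, so it preserves the generated algebra and maps the bracket words to $\pm$ the corresponding words. Reading off the dimensions $6,8,9$ from the list completes the proof.

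The main obstacle is making sure the case analysis is exhaustive and correct, especially on the boundary and degenerate sets. Examples are $b\in\{0,1\}$, intersections such as $b=\ell_1=\tfrac12$, and points where two of the special loci meet, for instance $b=\tfrac{\ell_1}{\ell_1+\ell_3}$ together with $\ell_1+\ell_3=1$, which forces $b=\ell_1$. There, denominators like $\ell_1+\ell_3$ or factors of the minors can degenerate simultaneously. I would handle this with a Gröbner-basis or factorization computation of the ideal of minors in each branch, in the spirit of \cite[\texttt{vectorfields.mw}]{attached_files}. After that I would check separately, by direct substitution, each intersection of two or more special loci to confirm that it lands in one of the listed branches.
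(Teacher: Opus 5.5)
Your proposal is correct and follows essentially the same route as the paper. Finite dimensionality comes from $\mfk{s}_{\ell_1,\ell_3,b}\subset\mfk{u}$; the iterated brackets are written in the order \eqref{VF_order} as columns of their $11$ coefficients $a_i$, and a parameter-dependent Gaussian elimination (rank via minors) is run, with computer algebra, until a whole layer $\mfk{d}^{k+1}\setminus\mfk{d}^k$ becomes dependent. Two points you make explicit are only implicit in the paper: stabilization of the $\mfk{D}^i$ yields the whole generated algebra, and the involution of Lemma~\ref{lemma:symmetry} sends each bracket word to $\pm$ the same word, so the greedy basis choice carries over to the mirrored branches.
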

\begin{proof}
    Iteratively written Lie brackets of $X_1,X_2$ in $\mfk{d}, \mfk{d}^2, \mfk{d}^3\dots$ span the Lie algebra $\mfk{s}_{\ell_1, \ell_3, b}$ and we want to choose a basis from them. Consistently with the procedure described above, we rewrite the vector fields from $\mfk{d}, \mfk{d}^2, \dots, \mfk{d}^7$ in the order \eqref{VF_order} into the form \eqref{VF_closed_form} and put their $11$ coefficients $a_i$ into columns of a matrix. Using Gaussian elimination on the matrix, we gradually choose linearly independent column vectors from left to right in the order \eqref{VF_order} with respect to the parameters $\ell_1, \ell_3, b$. As soon as all columns corresponding to $\mfk{d}^{k+1} \setminus \mfk{d}^k$ are linearly dependent on the previous columns for some index $k$, we obtain a basis of $\mfk{s}_{\ell_1, \ell_3, b}$ for the branch of parameters specified during the elimination.
    
    The matrix is explicitly computed in \cite[\texttt{vectorfields.mw}]{attached_files} and copied to \cite[\texttt{snake\_coefficient\_matrix.py}]{attached_files} in a suitable form for \texttt{SymPy}. The Gaussian elimination is performed in \texttt{Jupyter} nootebook \cite[\texttt{parameter\_depended\_basis.ipynb}]{attached_files}.
\end{proof}

\begin{remark}
    We emphasize that the preceding proposition is tied to our fixed choice of the vector fields $X_1,X_2$ as in \eqref{generators}. We don't claim that those families of Lie algebras are unique. We don't know, whether there are different vector fields spanning $\mc{D}$ in one branch that generate a Lie algebra appearing for another branch.
\end{remark}

%
Although the expression of $X_1, X_2$ in coordinates $(v_1, v_2, v_3, x, y)$ was handy for finding the finite dimensional Lie algebras, the following polynomial form will be better for calculations.
\begin{lemma}
    There are coordinates $(p_1, p_2, p_3, p_4, p_5)$, in which the vector fields $X_1, X_2$ are of the form
    \begin{equation}\label{polynomial_generators}
    \begin{aligned}
        X_1 &= \frac{1}{2 \ell_1} \left( ( \ell_1 - b)\, p_1^2 + ( \ell_1 + b) \right)\, \partial_{p_1} - \partial_{p_2}\\
         &\qquad {}+ \frac{1}{2 \ell_3} \left( (\ell_3+b-1)\, p_3^2 + (\ell_3-b+1) \right)\, \partial_{p_3} - p_5\, \partial_{p_4} + p_4\, \partial_{p_5},\\
        X_2 &= -\frac{p_1}{\ell_1}\, \partial_{p_1} + \frac{p_3}{\ell_3}\, \partial_{p_3} + \partial_{p_4}.
    \end{aligned}
    \end{equation}
\end{lemma}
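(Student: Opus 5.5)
We can obtain the coordinates $(p_1,\dots,p_5)$ by explicit substitution, using the half-angle (Weierstrass) substitution for the two joint angles and a rotating frame for the position. Concretely, set
\[
p_1 = \tan\tfrac{v_1}{2},\quad p_2 = v_2,\quad p_3 = \tan\tfrac{v_3}{2},\quad p_4 = x\cos v_2 + y \sin v_2,\quad p_5 = -x\sin v_2 + y\cos v_2 .
\]
This is a local diffeomorphism on the open dense set $v_1,v_3 \neq \pi$. There $\partial p_1/\partial v_1 = \tfrac{1}{2}(1+p_1^2)$ is nonzero, and the map $(x,y)\mapsto(p_4,p_5)$ is a rotation for each fixed $v_2$. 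The excluded folded positions are non-generic anyway, so nothing is lost for the local questions considered here.

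It then suffices to compute $X_i(p_j)$ for $i=1,2$ and $j=1,\dots,5$ from \eqref{generators}, since in the new coordinates $X_i = \sum_j X_i(p_j)\,\partial_{p_j}$. For $p_1$ we use $\sin v_1 = 2p_1/(1+p_1^2)$ and $\cos v_1 = (1-p_1^2)/(1+p_1^2)$. This gives
\[
X_2(p_1) = -\tfrac{1}{\ell_1}\sin v_1\cdot\tfrac{1+p_1^2}{2} = -\tfrac{p_1}{\ell_1},
\]
\[
X_1(p_1) = \Bigl(1+\tfrac{b}{\ell_1}\cos v_1\Bigr)\tfrac{1+p_1^2}{2} = \tfrac{1}{2\ell_1}\bigl(\ell_1(1+p_1^2)+b(1-p_1^2)\bigr),
\]
which is exactly the stated coefficient $\tfrac{1}{2\ell_1}\bigl((\ell_1-b)p_1^2+(\ell_1+b)\bigr)$. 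The computation for $p_3$ is identical with $b$ replaced by $1-b$, $\ell_1$ by $\ell_3$, and the sign of the $\sin$ term flipped. It yields $X_2(p_3) = p_3/\ell_3$ and $X_1(p_3) = \tfrac{1}{2\ell_3}\bigl((\ell_3+b-1)p_3^2+(\ell_3-b+1)\bigr)$. For $p_2$ we get $X_1(p_2)=-1$ and $X_2(p_2)=0$ immediately.

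For $p_4$ and $p_5$, note that $X_2$ has no $\partial_{v_2}$ component. Hence $X_2(p_4) = \cos^2 v_2 + \sin^2 v_2 = 1$ and $X_2(p_5) = -\cos v_2\sin v_2 + \sin v_2\cos v_2 = 0$. The field $X_1$ acts on $(x,y)$ only through $-\partial_{v_2}$. Therefore $X_1(p_4) = -(-x\sin v_2 + y\cos v_2) = -p_5$ and $X_1(p_5) = -(-x\cos v_2 - y\sin v_2) = p_4$. Collecting the terms gives \eqref{polynomial_generators}.

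I do not expect a genuine obstacle. The only real step is guessing the substitution: the tangent half-angle rationalises the $\cos$ and $\sin$ coefficients, and the rotation to the body frame of the central segment removes the explicit $v_2$ dependence. Once these are chosen, the rest is routine verification, which can also be checked in Maple as in \cite{attached_files}.
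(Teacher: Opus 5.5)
Your proposal is correct and matches the paper's proof: the paper uses the same substitution $p_1=\tan(v_1/2)$, $p_2=v_2$, $p_3=\tan(v_3/2)$, $p_4=x\cos v_2+y\sin v_2$, $p_5=y\cos v_2-x\sin v_2$, and pushes the vector fields forward by a direct Maple computation. Your hand computation of each $X_i(p_j)$ is right and just makes that verification explicit.
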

\begin{proof}
    We use the transformation
    \begin{equation}\label{polynomial_coordinates}
    \begin{gathered}
        p_1 = \tan \left(\tfrac{v_1}{2}\right),\quad p_2 = v_2, \quad p_3 = \tan \left(\tfrac{v_3}{2}\right)\\
        p_4 = x \cos(v_2) + y \sin(v_2),\quad p_5 = y \cos(v_2) - x \sin(v_2),
    \end{gathered}
    \end{equation}
    and push-forward the vector fields \eqref{generators} into the form \eqref{polynomial_generators} by direct computation provided in \cite[\texttt{vectorfields.mw}]{attached_files}.
\end{proof}

For instance, dealing with the standard snake $b=\ell_1=\ell_3=\tfrac{1}{2}$, the vector fields read as
    \begin{gather*}
        X_1 = \partial_{p_1} - \partial_{p_2} + \partial_{p_3} - p_5\, \partial_{p_4} + p_4\, \partial_{p_5},\\
        X_2 = -2p_1\, \partial_{p_1} + 2p_3\, \partial_{p_3} + \partial_{p_4}.
    \end{gather*}

For the same purpose as we expressed $X_6$ for the standard snake \eqref{X6}, we express any vector field $X_a$ complementary to the frame $X_1, \dots X_5$ in a basis of $\mfk{s}_{\ell_1, \ell_3,b}$, i.e.
\begin{equation}\label{coefficient_functions}
    X_a = \sum_{i=1}^5 A_a^i\, X_i,\quad A_a^i \in C^\infty(\mc{M}).
\end{equation}
Similarly to \eqref{X6}, there are relations between $A_a^1, \dots, A_a^4$, but no simple relation for $A_a^5$.
\begin{lemma} \label{lemma:Aai_relations}
    Let $X_a$ be any complementary vector field contained in a basis of $\mfk{s}_{\ell_1, \ell_3,b}$ listed above Proposition \ref{prop:branches}. Its coefficient functions $A_a^i$ from \eqref{coefficient_functions} satisfy
    \begin{equation} \label{Aai_relations}
        A_a^1 = 0,\quad
        A_a^2 = 
        \begin{cases}
            A_a^4 + 1, & a = 114 \\
            A_a^4, & \text{otherwise}
        \end{cases},\quad
        A_a^3 =
        \begin{cases}
            -1, & a = 14\\
            1, & a=1114\\
            0, & a=\text{otherwise}
        \end{cases}.
    \end{equation}
\end{lemma}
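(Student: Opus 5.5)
Since $X_1,\dots,X_5$ form a frame, the coefficients in \eqref{coefficient_functions} are unique, so I only need to exhibit explicit combinations. The natural tool is the annihilator of $\mc{D}^2=\langle X_1,X_2,X_3\rangle$. Let $\theta^1,\dots,\theta^5$ be the coframe dual to $X_1,\dots,X_5$. Then $A_a^i=\theta^i(X_a)$, and the claims concern only the components along $X_1,X_2,X_3$ and their relation to the $X_4$ component. My first step is to find the simplest expression of the candidate complementary fields. I expect that $X_{14}$, $X_{25}$, $X_{114}$, $X_{214}$, $X_{125}$, $X_{225}$ and $X_{1114}$ all have the special shape
\[
X_a = c_a X_2 + \epsilon_a X_3 + f_a\,(X_2+X_4) + g_a X_5
\]
with constants $c_a$, $\epsilon_a$ and functions $f_a,g_a$. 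This is exactly what \eqref{Aai_relations} says: $A^1=0$, $A^3$ constant, and $A^2-A^4$ constant.

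To see this structurally, I would work in the polynomial coordinates \eqref{polynomial_generators}. There $X_2=\partial_{p_4}+(\ldots)\partial_{p_1}+(\ldots)\partial_{p_3}$, and $X_1$ is the only field with a $\partial_{p_2}$ component. The vector fields in \eqref{VF345} and all further brackets have no $\partial_{v_2}=\partial_{p_2}$ term, because $a_4$ is constant and brackets kill it. Reading off the $\di p_2$ component therefore gives $A_a^1=0$ immediately for every bracket $X_a$ with $a\neq 1$.

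Next I would examine the $\partial_x,\partial_y$ part, i.e.\ the $(p_4,p_5)$-plane rotated by $v_2$. In the original coordinates the $(\partial_x,\partial_y)$ parts of $X_2,X_3,X_4$ are $(\cos v_2,\sin v_2)$, $(\sin v_2,-\cos v_2)$ and $-(\cos v_2,\sin v_2)$, while $X_5$ has no such part. So $X_2+X_4$ and $X_5$ are exactly the frame combinations with vanishing $x,y$ components. Writing the $(x,y)$-part of $X_a$ as $\alpha\,(\cos v_2,\sin v_2)+\beta\,(\sin v_2,-\cos v_2)$ with constants $\alpha,\beta$ (by \eqref{VF_closed_form}, iterated brackets of $\cos v_2,\sin v_2$ terms under $-\partial_{v_2}$ stay in this span), I get $A_a^3=\beta$ and $A_a^2-A_a^4=\alpha$.

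The computation then reduces to tracking the two constants $(\alpha,\beta)$ through the brackets. Only $X_1$ acts nontrivially on them, via $-\partial_{v_2}$, which rotates $(\alpha,\beta)\mapsto(-\beta,\alpha)$ up to sign. The $\partial_{v_1},\partial_{v_3}$ parts of $X_2$ do not touch the $x,y$ components. Starting from $X_3$ with $(\alpha,\beta)=(0,1)$ and $X_4$ with $(-1,0)$, and using that $X_5$ has $(0,0)$, I track each case:
\begin{itemize}
\item $X_{14}$ gives $(0,-1)$;
\item $X_{114}$ gives $(1,0)$;
\item $X_{1114}$ gives $(0,1)$;
\item fields beginning with $X_2$, and all descendants of $X_5$, give $(0,0)$.
\end{itemize}
These values reproduce exactly the case distinctions in \eqref{Aai_relations}.

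The main obstacle is sign bookkeeping in the rotation step: checking which rotation direction $-\partial_{v_2}$ induces, and verifying it against \eqref{VF345}. I would confirm each case directly with the Maple worksheet \cite[\texttt{vectorfields.mw}]{attached_files}. Since the whole argument rests on the $\theta^1$ component and the $(x,y)$-components alone, no branch-dependent analysis is needed.
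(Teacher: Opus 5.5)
Your proposal is correct and follows the same route as the paper: $A_a^1=0$ because the $\partial_{v_2}$-coefficient is constant and vanishes for all brackets, and $A_a^3$ and $A_a^2-A_a^4$ are read off from the $(\partial_x,\partial_y)$-parts of $X_2,X_3,X_4$, since $X_5$ has no such part. Your explicit tracking of $(\alpha,\beta)\mapsto(-\beta,\alpha)$ under $[X_1,\cdot\,]$, together with its vanishing under $[X_2,\cdot\,]$, supplies by hand the concrete values for each $X_a$ that the paper defers to the Maple worksheet.
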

\begin{proof}
    Because the $\partial_{v_2}$ term has a constant coefficient in the expressions \eqref{generators} of $X_1$ and cannot appear in the vector field $X_a$, the equality $A_a^1 = 0$ is obvious. Looking to vector fields $X_1, \dots, X_5$ in \eqref{generators}, \eqref{VF345}, the terms with $\partial_x$ and $\partial_y$ appear in $X_2,X_3,X_4$. They contain $\pm\cos v_2$, $\pm\sin v_2$ and similar terms with coefficients $\pm 1$ or $0$ appear for $X_a$. There are three possible $\partial_x$ and $\partial_y$-terms appearing for $X_a$:
    \begin{enumerate}[label=\arabic*)]
        \item $\pm \cos v_2\,\partial_x \pm \sin v_2\,\partial_y$, then $A^2_a = A^4_a \pm 1$ and $A^3_a = 0$
        \item $0\,\partial_x + 0\,\partial_y$, then $A^2_a = A^4_a$ and $A^3_a = 0$
        \item $\pm \sin v_2\,\partial_x \mp \cos v_2\,\partial_y$, then $A^2_a = A^4_a and A^3_a = \pm 1$.
    \end{enumerate}
    The concrete values are computed in \cite[\texttt{vectorfields.mw}]{attached_files}.
\end{proof}

\section{Coframe structure equations} \label{section:coframe_structure}
In this section, we derive equations analogous to \eqref{coframe_structure_standard_1} and \eqref{coframe_structure_standard_2} that will be used later to compute the Cartan biquartics for the branches listed in Proposition \ref{prop:branches}. We will proceed in a general setting with the intended application to those branches on mind.

Let us consider vector fields $X_1, \dots, X_{n+m}$ on an $n$-dimensional manifold $\mc{M}^n$, such that $X_1,\dots, X_n$ form a frame of $\mc{M}^n$ and they generate a finite dimensional Lie algebra $\mfk{s} \subset \mfk{X}(\mc{M}^n)$ with the basis $X_1, \dots, X_n,X_{n+1}, \dots, X_{n+m}$. Denote $\theta^1,\dots,\theta^n$ the one-forms that are dual to the frame $X_1,\dots,X_n$.

We will use indices $i,j,k$ for $1, \dots, n$ and $a,b,c$ for $n+1,\dots,n+m$. The indices $\alpha,\beta,\gamma$ will be used to represent any of the listed indices. Because $X_1, \dots, X_n$ generates the $T\mc{M}^n$ in each point, we can use them to express $X_{n+1}, \dots, X_{n+m}$ in functions $A_a^k \in C^\infty (\mc{M}^n)$,
\begin{equation} \label{Aak_functions}
    X_a = \sum A_a^k \,X_k .
\end{equation}
Next, let us denote by $C_{\alpha \beta}^\gamma$ 
the structure coefficients of the considered Lie algebra.
\begin{equation} \label{structure_coefficients}
    [X_i,X_j] = \sum C_{ij}^k X_k + \sum C_{ij}^b X_b, \quad [X_i,X_a] = \sum C_{ia}^k X_k + \sum C_{ia}^b X_b.
\end{equation}

Now, we will use the fact that the Lie algebra is finitely generated and describe the differential of function coefficients $A_a^k$.
\begin{proposition}
    The exterior derivative of any $A_a^k$ is polynomial in $A_b^j$ and it holds
    \begin{equation}\label{coefficient_function_differential}
        \di A_a^k = \sum_i \left(
        C_{ia}^k + \sum_b C_{ia}^b A_b^k - \sum_j C_{ij}^k A_a^j - \sum_j \sum_b C_{ij}^b A_a^j A_b^k
        \right) \theta^i.
    \end{equation} 
\end{proposition}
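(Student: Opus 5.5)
The plan is to compute the directional derivatives $X_i(A_a^k)$ directly and then use $\di A_a^k = \sum_i X_i(A_a^k)\,\theta^i$, which holds because $\theta^1,\dots,\theta^n$ is the coframe dual to $X_1,\dots,X_n$. The source of these derivatives is the bracket $[X_i,X_a]$, which can be expanded in two ways: once through the Lie algebra structure \eqref{structure_coefficients}, and once through the defining relation \eqref{Aak_functions}.

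For the first expansion I start from \eqref{structure_coefficients} and substitute \eqref{Aak_functions} for the complementary fields $X_b$. This gives
\[ [X_i,X_a] = \sum_k \Bigl( C_{ia}^k + \sum_b C_{ia}^b A_b^k \Bigr) X_k . \]
For the second expansion I write $X_a = \sum_j A_a^j X_j$ and use the Leibniz rule for the bracket of a vector field with a function multiple of a vector field:
\[ [X_i,X_a] = \sum_j X_i(A_a^j)\, X_j + \sum_j A_a^j\, [X_i,X_j]. \]
Into the last term I insert $[X_i,X_j] = \sum_k C_{ij}^k X_k + \sum_b C_{ij}^b X_b$ and again replace $X_b$ by $\sum_k A_b^k X_k$. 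The result is
\[ [X_i,X_a] = \sum_k \Bigl( X_i(A_a^k) + \sum_j C_{ij}^k A_a^j + \sum_j\sum_b C_{ij}^b A_a^j A_b^k \Bigr) X_k . \]

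Since $X_1,\dots,X_n$ is a frame, the coefficients of each $X_k$ in the two expansions must agree. Solving for $X_i(A_a^k)$ gives exactly the bracket in \eqref{coefficient_function_differential}. Summing these against $\theta^i$ then yields the formula, and it is visibly polynomial (at most quadratic) in the functions $A_b^j$, with constant coefficients $C$.

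The only subtle point I foresee is index bookkeeping, in particular the sign convention in the Leibniz rule $[X, fY] = X(f)\,Y + f\,[X,Y]$. With that convention the signs match the displayed formula, and there is no real obstacle.
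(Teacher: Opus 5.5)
Your proposal is correct and follows the paper's proof: you expand $[X_i,X_a]$ once via the structure constants and once via the Leibniz rule applied to $X_a=\sum_j A_a^j X_j$, then compare coefficients in the frame $X_1,\dots,X_n$ to get $X_i(A_a^k)$. Since $\di A_a^k=\sum_i X_i(A_a^k)\,\theta^i$, this yields the formula, exactly as in the paper.
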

\begin{proof}
    \begin{align*}
        [X_i,X_a] &= \sum C_{ia}^k \,X_k + \sum C_{ia}^b \,X_b = \sum C_{ia}^k \,X_k + \sum \sum C_{ia}^b A_b^j \,X_j\\
        &= \sum_k \left( C_{ia}^k + \sum C_{ia}^b A_b^k \right) X_k
    \end{align*}
    \begin{align*}
        [X_i, X_a] &= [X_i, \sum A_a^j \,X_j] = \sum_j \left( X_i A_a^j \right) X_j + \sum_j A_a^j \,[X_i,X_j]\\
        &= \sum_j \left( X_i A_a^j \right) X_j + \sum_j A_a^j \left( \sum_k C_{ij}^k X_k + \sum_b C_{ij}^b X_b \right)\\
        &= \sum_j \left( X_i A_a^j \right) X_j + \sum_j A_a^j \left( \sum_k C_{ij}^k X_k + \sum_b C_{ij}^b \sum_k A_b^k X_k \right)\\
        &= \sum_k \left( (X_i A_a^k) + \sum_j A_a^j C_{ij}^k + \sum_j \sum_b A_a^j C_{ij}^b A_b^k \right) X_k
    \end{align*}
    where $(X_i A_a^k)$ denote the differentiation by $X_i$. Because the two equations are equal, we can express this term by comparison of the coefficients
    \begin{equation} \label{XiAak}
        (X_i A_a^k) = \di A_a^k (X_i) = C_{ia}^k + \sum_b C_{ia}^b A_b^k - \sum_j C_{ij}^k A_a^j - \sum_j \sum_b C_{ij}^b A_a^j A_b^k. \qedhere
    \end{equation}
\end{proof}
Now, we can write the structure equations analogously to \eqref{coframe_structure_standard_1}, \eqref{coframe_structure_standard_2}.
\begin{proposition}
    The structure equations of the coframe $\theta^1, \dots, \theta^n$ are
    \begin{equation}\label{structure_equation}
        \di \theta^i = -\sum_{j<k} \left( C_{jk}^i + \sum_a C_{jk}^a A_a^i \right)\, \theta^j \wedge \theta^k.
    \end{equation}
\end{proposition}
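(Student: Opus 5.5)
The plan is to apply the standard coframe identity already used in Section~\ref{section:std_snake}, namely
\[ \di \theta^i (X_j,X_k) = X_j\bigl(\theta^i(X_k)\bigr) - X_k\bigl(\theta^i(X_j)\bigr) - \theta^i([X_j,X_k]) = -\theta^i([X_j,X_k]). \]
The first two terms vanish because $\theta^i(X_k)=\delta^i_k$ is constant. Since $\theta^1,\dots,\theta^n$ is a coframe, the two-form $\di\theta^i$ is determined by its values on pairs $(X_j,X_k)$ with $j<k$. We can therefore write
\[ \di\theta^i = \sum_{j<k} \di\theta^i(X_j,X_k)\, \theta^j\wedge\theta^k, \]
which holds because $(\theta^j\wedge\theta^k)(X_{j'},X_{k'}) = \delta^j_{j'}\delta^k_{k'}$ for $j<k$ and $j'<k'$.

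Next, we compute $\theta^i([X_j,X_k])$ from the structure coefficients \eqref{structure_coefficients}, giving
\[ [X_j,X_k] = \sum_l C_{jk}^l X_l + \sum_a C_{jk}^a X_a. \]
We then substitute \eqref{Aak_functions}, $X_a = \sum_l A_a^l X_l$, to express everything in the frame:
\[ [X_j,X_k] = \sum_l \Bigl( C_{jk}^l + \sum_a C_{jk}^a A_a^l \Bigr) X_l. \]
Applying $\theta^i$ picks out the $l=i$ coefficient, so $\di\theta^i(X_j,X_k) = -\bigl(C_{jk}^i + \sum_a C_{jk}^a A_a^i\bigr)$. Inserting this into the expansion above yields \eqref{structure_equation}.

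There is no real obstacle here. The only points to watch are the sign convention, which is fixed by the formula $\di\theta(X,Y) = X\theta(Y) - Y\theta(X) - \theta([X,Y])$ already used for \eqref{coframe_structure_standard_1}, and the restriction to $j<k$, which avoids double counting: summing over all $j,k$ would require a factor $\tfrac12$. As a sanity check, the result should reproduce \eqref{coframe_structure_standard_1} for the standard snake, using Table~\ref{multiplication_table} and \eqref{X6}. For example, $\di\theta^3$ receives $-\theta^1\wedge\theta^2$ from $[X_1,X_2]=X_3$, and the term $+\theta^1\wedge\theta^4$ comes from $[X_1,X_4]=X_6$ with $A_6^3=-1$.
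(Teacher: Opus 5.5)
Your proposal is correct and follows the paper's proof: you evaluate $\di\theta^i$ on frame pairs via $\di\theta^i(X_j,X_k) = -\theta^i([X_j,X_k])$, expand the bracket using the structure coefficients and $X_a = \sum_l A_a^l X_l$, and read off the $X_i$-coefficient. Your extra remarks are also accurate, namely the explicit $j<k$ expansion and the check against the standard snake, where $A_6^3=-1$ produces the $+\theta^1\wedge\theta^4$ term in $\di\theta^3$.
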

\begin{proof}
    The equation \eqref{structure_equation} comes from the fact
    \[ \di \theta^i (X_j,X_k) = X_j \cdot \theta^i(X_k) - X_k \cdot \theta^i(X_j) - \theta^i([X_j,X_k]) = - \theta^i([X_j,X_k]), \]
    where $X_j, X_k$ are the vector fields from the frame dual to the coframe $\theta^i$. Because
    \[ [X_j,X_k] = \sum_{\alpha} C_{jk}^\alpha X_\alpha = \sum_{l} C_{jk}^l X_l + \sum_{a} C_{jk}^a X_a = \sum_{l} C_{jk}^l X_l + \sum_{a} C_{jk}^a \sum_l A_a^l X_l, \]
    the coefficient in $X_i$-term is precisely $C_{jk}^i + \sum_a C_{jk}^a A_a^i$.
\end{proof}

\section{Description via Parabolic Geometry} \label{section:parabolic}
First, we will recall key ideas about parabolic geometries from \cite[pp. 234-271]{parabook} without proofs. Using \cite[p.~431]{parabook} and \cite{dt2022}, we will specify these ideas to distributions with the growth vector $(2,3,5)$, and we will follow the comments in \cite[p.~77]{parabook} to treat them from the local point of view. We will present Algorithm~\eqref{normalization_algorithm} finding
the unique Cartan connection and analyse the harmonic curvature in Section~\ref{section:harmonic_curvature}. The algorithm is then realized for each considered distribution $\mc{D}_{\ell_1,\ell_3,b}$ in \cite[\texttt{normalization.mw}]{attached_files} and the results are presented in Section~\ref{section:results}.

It is well-known that geometries given by $(2,3,5)$-distributions admit regular, normal Cartan connections of parabolic geometries of type $(G_2,P)$, as one can find in \cite[pp.\! 271, 431]{parabook}, where $G_2$ is the $14$-dim exceptional Lie group and $P$ its parabolic Lie subgroup. The Lie algebras $\mfk{g} \coloneqq \on{Lie}(G_2)$ and $\mfk{p} \coloneqq \on{Lie}(P)$ correspond to the Dynkin diagram
\begin{equation}\label{dynkin_g2}
    \dynkin[reverse arrows=true, root radius=0.1cm, edge length=1cm]{G}{xo},
\end{equation}
where the cross $\times$ on the left simple root $\alpha$ indicate that the root space $-\alpha$ does not belong to $\mfk{p}$.

A parabolic geometry of type $(G_2,P)$ on $\mc{M}$ is a principal bundle $p \colon \mc{G} \to \mc{M}$ with the structure group $P$ together with a \emph{Cartan connection} $\omega$, i.e.~a $\mfk{g}$-valued one form $\omega \in \Omega^1(\mc{G},\mfk{g})$ such that
\begin{enumerate}[label=\arabic*)]
    \item $\omega$ is $P$-equivariant, i.e.~$R_g^* \omega = {\on{Ad}_{g^{-1}}} \circ \omega$ for all $g \in P$,
    \item it defines absolute parallelism, $\omega_u \colon T_u\mc{G} \to \mfk{g}$ is an isomorphism for all $u\in \mc{G}$,
    \item it reproduces fundamental vector fields, $\omega\left( \tfrac{d}{dt}\big|_0 R_{\on{exp} tA}(u)\right) = A$ for all $A \in \mfk{g}, u \in \mc{G}$.
\end{enumerate}

The \emph{curvature} $\Omega \in \Omega^2(\mc{G},\mfk{g})$ of the Cartan connection $\omega$ is defined by
\begin{equation} \label{curvature}
    \Omega(X,Y) \coloneqq \di\omega(X,Y) + [\omega(X),\omega(Y)]
\end{equation}
for any $X,Y \in T\mc{G}$, and is horizontal, i.e.~$\Omega(X,\cdot) = 0$ whenever $X \in V\mc{G}$. Therefore, we can define the \emph{curvature function} $\kappa\colon \mc{G} \to \bigwedge^2 (\mfk{g}/\mfk{p})^* \otimes \mfk{g}$ of $\omega$ by
\begin{equation} \label{curvature_function}
    \kappa (u) (A,B) = \Omega\left( \omega_u^{-1}(A),\omega_u^{-1}(B) \right)
\end{equation}
using any representative of $A,B \in \mfk{g}/\mfk{p}$.

\subsection{Filtration and grading}
\begin{definition}
    Let $V$ and $W$ be filtered vector spaces $V^i \supseteq V^{i+1}$, $W^j \supseteq W^{j+1}$. We say that a linear map
    \[ \textstyle \alpha\colon \bigwedge^k V \to W \]
    is of \emph{homogeneity} $\geq \ell$, if
    \begin{equation}\label{def_homogeneity_geq}
        \alpha (V^{i_1} \wedge \dots \wedge V^{i_k}) \subseteq W^{\ell + (i_1+\dots+i_k)}
    \end{equation}
    for all $i_1, \dots, i_k$. Further, fix a grading on $V$ and $W$
    \begin{equation*}
        \textstyle V = \bigoplus_i V_i,\ W = \bigoplus_i W_i,\ \text{such that}\ V^j = \bigoplus_{i \geq j} V_i,\ W^j = \bigoplus_{i \geq j} W_i.
    \end{equation*}
    We say that the linear map $\alpha$ is of \emph{homogeneity} $\ell$, if
    \begin{equation*}
        \alpha (V_{i_1} \wedge \dots \wedge V_{i_k}) \subseteq W_{\ell + (i_1+\dots+i_k)}
    \end{equation*}
    for all $i_1, \dots, i_k$. We can decompose any linear map $\beta$ between two graded spaces by its homogeneities, $\beta = \sum_\ell \beta_\ell$. Thanks to this decomposition, we get the filtration and grading on the space of linear maps $L(\bigwedge^k V,W)$ by homogeneities
    \begin{equation} \label{linear_map_homogeneities}
        \textstyle L(\bigwedge^k V,W) = \bigoplus_i L(\bigwedge^k V,W)_i, \quad
    L(\bigwedge^k V,W)^j = \bigoplus_{i \geq j} L(\bigwedge^k V,W)_i.
    \end{equation}
    Similarly, we define \emph{homogeneity} on vector bundles. 
    
    We call the vector space
    \[ \on{gr} V = \bigoplus_i (\on{gr} V)_i \coloneqq \bigoplus_i V^i/V^{i+1} \]
    the \emph{associated graded} space to $V$. When we choose representatives of a $\on{gr} V$ basis in $V$, they generate corresponding subspaces $V_i$ that form a grading. This choice is equivalent to the choice of an isomorphism of vector spaces $\on{gr} V \cong V$. Clearly, a linear map $\alpha$ of homogeneity $\geq \ell$, as in \eqref{def_homogeneity_geq}, defines an induced linear map
    \[ \widetilde{\alpha}\colon {\textstyle \bigwedge^k} \on{gr} V \to \on{gr} W  \]
    of homogeneity $\ell$.
\end{definition}

The Lie algebra $\mfk{g} = \on{Lie}(G_2)$ admits a filtration
\begin{equation}\label{filtration_on_g}
    \mfk{g} = \mfk{g}^{-3} \supset \mfk{g}^{-2} \supset \mfk{g}^{-1} \supset \mfk{g}^{0} \supset \mfk{g}^{1} \supset \mfk{g}^{2} \supset \mfk{g}^{3} \supset \{0\}
\end{equation}
where $[\mfk{g}^i,\mfk{g}^j] \subseteq \mfk{g}^{i+j}$, and we fix a grading
\begin{equation}\label{grading_of_g}
    \underbrace{\mfk{g}_{-3} \oplus \mfk{g}_{-2} \oplus \mfk{g}_{-1}}_{\mfk{g}_-} \oplus \underbrace{\mfk{g}_{0} \oplus \mfk{g}_{1} \oplus \mfk{g}_{2} \oplus \mfk{g}_{3}}_{\mfk{p}},
\end{equation}
$[\mfk{g}_i,\mfk{g}_j] \subseteq \mfk{g}_{i+j}$, where $\mfk{g}^i = \mfk{g}_i \oplus \dots \oplus \mfk{g}_3$. Further, $\mfk{p}_+ \coloneqq \mfk{g}^1$, $\mfk{p} = \mfk{g}^0$, $\mfk{g}/\mfk{p} \cong \mfk{g}_-$. The filtration is $P$-invariant and the grading is $G_0$-invariant, where $\mfk{g}_0 = \on{Lie}(G_0)$.

\subsection{Regular Cartan connection}
In order to encode the geometric properties of a $(2,3,5)$-distribution in the form of a parabolic geometry $p\colon \mc{G} \to \mc{M}$ of type $(G_2,P)$, we need it defines the same filtration on $T\mc{M}$ (\cite[pp.\! 246-270]{parabook}). The filtration induced by $\mc{D}$ is
\begin{equation}\label{filtration_by_D}
    \begin{gathered}
        T\mc{M} = \mc{D}^3 \supset \mc{D}^2 \supset \mc{D}^1 \coloneqq \mc{D},\\
        \mc{D}^i = \mc{D}^{i-1} + [\mc{D}, \mc{D}^{i-1}] \quad \text{for } i\in \{2,3\}
    \end{gathered}
\end{equation}
and, using the composition
\begin{equation*}
    \mfk{g} \xlra{\omega^{-1}} T\mc{G} \xlra{p_*} T\mc{M},
\end{equation*}
the filtration induced by the Cartan connection $\omega$ is
\begin{equation}\label{filtration}
    \begin{gathered}
        T\mc{M} = T^{-3}\mc{M} \supset T^{-2}\mc{M} \supset T^{-1}\mc{M},\\
        p_* \bigl( \omega^{-1} (\mfk{g}^i) \bigr) = T^{i}\mc{M} \quad \text{for } i\in \{-3,-2,-1\}.
    \end{gathered}
\end{equation}

Now, suppose both filtrations \eqref{filtration_by_D}, \eqref{filtration} to be the same, $\mc{D}^i = T^{-i}\mc{M}$.
Both approaches induce an algebraic binary operation on the graded space $\on{gr} T\mc{M}$,
\begin{equation}\label{grTM}
    \on{gr} T\mc{M} = T^{-3}\mc{M}/T^{-2}\mc{M} \oplus T^{-2}\mc{M}/T^{-1}\mc{M} \oplus T^{-1}\mc{M},
\end{equation}
which \eqref{filtration_by_D} induces by Lie brackets of vector fields on $T\mc{M}$ (so called \emph{Levi brackets}) and \eqref{filtration} by Lie brackets of the Lie algebra $\mfk{g}_-$. We call the parabolic geometry \emph{regular}, if these two define the same Lie algebra structure on $\on{gr} T\mc{M}$, i.e.
\begin{equation}\label{gr_g-}
    \on{gr}T\mc{M} \cong \on{gr} \mfk{g}_- = \mfk{g}^{-3} / \mfk{g}^{-2} \oplus \mfk{g}^{-2} / \mfk{g}^{-1} \oplus \mfk{g}^{-1} / \mfk{g}^0.
\end{equation}
It fact, the parabolic geometry is regular (\cite[p.\! 256]{parabook}) if and only if its curvature $\kappa$ contains only terms of positive homogeneities.

\begin{remark}
    Consider the reduction $\mc{G}_0 \xlra{p_0} \mc{M}$ of the principal bundle $\mc{G} \xlra{p} \mc{M}$ to the subgroup $G_0 \subset P$, where $G_0$ is such that $\mfk{g}_0 = \on{Lie}(G_0)$. Since the $(2,3,5)$-geometries are given by the filtration \eqref{filtration_by_D} only, i.e., without any further reduction, the principal bundle $\mc{G}_0$ is the whole graded frame bundle $\on{Fr_{gr}}\mc{M}$ and $G_0 = \on{Aut_{gr}}(\mfk{g}_-)$.
\end{remark}

\begin{convention}
    From now, any considered Cartan connection will define the same filtration on $T\mc{M}$ and the same Lie algebra structure on $\on{gr} T\mc{M}$ as a given $(2,3,5)$-distribution $\mc{D}$. Therefore, any Cartan connection $\omega$ will be regular.
\end{convention}

\subsection{Normal Cartan connection}
There is a chain complex
$ \left( C_\bullet (\mfk{p}_+,\mfk{g}), \partial^* \right)$, with $C_k (\mfk{p}_+,\mfk{g}) = \bigwedge^k \mfk{p}_+ \otimes \mfk{g}$
and a cochain complex
$\left( C^\bullet (\mfk{g}_-,\mfk{g}), \partial \right)$, $C^k (\mfk{g}_-,\mfk{g}) = \bigwedge^k \mfk{g}_-^* \otimes \mfk{g}$.
The chains and cochains are linear maps between two graded spaces, hence they admit the filtration and grading by homogeneities as \eqref{linear_map_homogeneities}. The operators $\partial, \partial^*$ are the standard Lie algebra cohomology and homology differentials and they preserve the homogeneities. The operator $\partial$ is $G_0$-equivariant and $\partial^*$ is $P$-equivariant. For later use, let us write $\partial^* \colon C_2(\mfk{p}_+,\mfk{g}) \to C_1 (\mfk{p}_+, \mfk{g})$ explicitly (see \cite[p.~6]{dt2022}) for any basis element
\begin{equation}\label{del_star}
    \partial^*(X \wedge Y \otimes Z) = -Y \otimes [X,Z] + X \otimes [Y,Z] - [X,Y] \otimes Z
\end{equation}
and it is defined by linearity for any other element.

The Killing form on $\mfk{g}$ is nondegenerated and defines identifications
\begin{equation}\label{identities}
   (\mfk{g}/\mfk{p})^* \cong \,\mfk{p}_+ \cong (\mfk{g}_-)^*,
\end{equation}
where the first is an isomorphism of $P$-modules and the second is an isomorphism of $G_0$-modules. Moreover, one can define $\partial^*\colon C^{k+1} (\mfk{g}_-,\mfk{g}) \to C^{k} (\mfk{g}_-,\mfk{g})$ using an appropriate hermitian form and the identities \eqref{identities}. The differentials $\partial, \partial^*$ are adjoint to each other which leads to the \emph{Hodge decomposition}
\begin{equation}\label{hodge_decomposition}
    C^k(\mfk{g}_-, \mfk{g}) \cong \mathrlap{\,\overbrace{\phantom{\on{im} \partial \oplus \ker \square}}^{\ker \partial}} \on{im} \partial \oplus \underbrace{\ker \square \oplus \on{im} \partial^*}_{\ker \partial^*},
\end{equation}
where $\square = \partial \partial^* + \partial^* \partial$ is the \emph{Kostant Laplacian} and $\ker \square = \ker \partial \cap \ker \partial^*$.

Thanks to \eqref{identities}, we can consider $\kappa_u \in C^2 (\mfk{g}_-,\mfk{g})$ for the curvature function $\kappa$ in a point $u \in \mc{G}$ defined by \eqref{curvature_function}. We call the corresponding Cartan connection $\omega$ \emph{regular}, or \emph{normal}, if
\begin{equation} \label{Cc_regular_normal}
    \kappa_u \in C^2 (\mfk{g}_-, \mfk{g})^1,\ \text{or}\ \kappa_u \in {\ker \partial^*}\ \text{for any}\ u \in \mc{G},\ \text{respectively}.
\end{equation}

For a regular, normal geometry we define the \emph{harmonic curvature} $\kappa_H\colon \mc{G} \to H_2(\mfk{g}_-,\mfk{g})$
\begin{equation}
    \kappa_H = \kappa + \on{im} \partial^*,
\end{equation}
where
\begin{equation*}
    H^2(\mfk{g}_-,\mfk{g}) \cong \frac{\ker \partial}{\on{im} \partial} \cong \ker \square \cong \frac{\ker \partial^*}{\on{im} \partial^*} \cong H_2(\mfk{p}_+,\mfk{g}).
\end{equation*}

Since $\partial,\partial^*$ preserve homogeneities, the homology and cohomology groups $H_\bullet, H^\bullet$ inherit gradings. One can show (\cite[p.\! 7]{dt2022}), that $H^1(\mfk{g}_-, \mfk{g})^1 = \{0\}$ for $\mfk{g} = \on{Lie}(G_2)$ and our choice of $\mfk{p}$. It allows us to replace each parabolic geometry of type $(G_2,P)$ by an equivalent one which is normal and regular. Moreover, it is unique up to automorphisms of parabolic geometries.

\subsection{Normalization procedure} \label{section:normalization_procedure}

The filtration on $T\mc{M}$ induces the filtration on $T\mc{G}$
\begin{equation}\label{filtration_on_TG}
\begin{gathered}
    T\mc{G} = T^{-3}\mc{G} \supset T^{-2}\mc{G} \supset T^{-1}\mc{G} \supset T^0\mc{G},\\
    T^{-i}\mc{G} \coloneqq p^{-1}(\mc{D}^i),\ T^0\mc{G} \coloneqq V\mc{G},
\end{gathered}
\end{equation}
and it has to be preserved by any Cartan connection $\omega$, i.e.
\[ p^{-1}(\mc{D}^{-i}) = T^i\mc{G} = \omega^{-1}(\mfk{g}^i) \text{ for } i\leq 0. \]
Of course, $T^i\mc{G}$ cannot be fixed for $i > 0$, as it would depend on $\omega$.

Consider two Cartan connections $\omega, \widetilde{\omega}$. It follows from equivariance that
\begin{equation*} 
    \left(\widetilde\omega - \omega \right) \big|_{T^0\mc{G}} = 0.
\end{equation*}
Now, suppose $(\widetilde\omega - \omega) \left(T^i \mc{G}\right) \subset \mfk{g}^{i+j}$, it can be shown (\cite[p.\! 260]{parabook}) that 
\begin{equation} \label{kappa_difference}
    \widetilde\kappa - \kappa \colon \mc{G} \to C^2(\mfk{g}_-, \mfk{g})^j.
\end{equation}
Concretely, in any point $u \in \mc{G}$, one can consider $\phi \coloneqq (\widetilde\omega - \omega)_u$ as%
\footnote{One can define it as $(\widetilde\omega - \omega)_u \circ \widetilde\omega_u^{-1}$ or $(\widetilde\omega - \omega)_u \circ \omega_u^{-1}$. Their difference belongs to $C^1(\mfk{g}_-,\mfk{g})^{j+1}$, hence it hides in the $\psi$ term in the equation \eqref{kappa_difference_concretely}.}
$\phi \in C^1(\mfk{g}_-, \mfk{g})^j$ and
\begin{equation} \label{kappa_difference_concretely}
    \widetilde\kappa_u = \kappa_u + \partial \phi + \psi, \quad \psi \in C^2(\mfk{g}_-, \mfk{g})^{j+1},
\end{equation}
where $\partial$ is the cohomology differential. On the other hand, $\widetilde\omega = \omega + \phi$ is again a Cartan connection matching the filtered structure on $T\mc{M}$ for any $\phi\colon \mc{G} \to C^1(\mfk{g}_-,\mfk{g})^1$. Since $\kappa$ decomposes into parts in $\on{im} \partial$ and $\ker \partial^*$, see \eqref{hodge_decomposition}, we can kill the $\on{im} \partial$ part homogeneity by homogeneity until $\kappa$ belong to $\ker \partial^*$ and $\omega$ is normal.
Altogether, we can start with an arbitrary regular Cartan connection $\omega$ and obtain the regular normal one in a finitely many steps.

\subsection{Local description} \label{section:local_describtion}
We transfer this problem down to $T\mc{M}$, as we have more direct way to describe grading on $T\mc{M}$. The key ingredient is the $P$-equivariance: $\omega$ can be reconstructed from its values $\omega_{\sigma(x)}$ in points of a section $\sigma$ of $\mc{G}$, and the $P$-equivariance of $\ker \partial^*$ extends normality of $\omega\big|_{\on{im} \sigma}$ to the entire $\omega$.

We will proceed from the local point of view on an open subset $U \subset \mc{M}$ where the vector fields \eqref{frame} form a frame and the distribution $\mc{D}$ is $(2,3,5)$ on $U$. Following \cite[p.\! 77]{parabook},
a choice of a frame on $U \subset \mc{M}$ compatible with the $(2,3,5)$-filtration \eqref{filtration_by_D} corresponds to a section $\sigma\colon U \to \mc{G}$, $p \circ \sigma = \on{id}_U$, such that the pullback of the Cartan connection $\sigma^*\omega$ defines an isomorphism of $T_x\mc{M} \cong \mfk{g}_-$ in any point $x \in \mc{M}$.

Let us consider any local section $\sigma$ and any one-form $\alpha \in \Omega^1(\mc{M},\mfk{g})$ that induces a grading-preserving isomorphism $\alpha_x \colon T_x\mc{M} \to \mfk{g}_-$ for any $x \in \mc{M}$. Since $\sigma$ defines a local trivialization $\mc{G} \simeq U \times G_2$, we can construct a Cartan connection $\omega$ from $\sigma_* \alpha$ using the equivariance property, and then $\alpha = \sigma^* \omega$. For another section $\hat\sigma$, one can obtain the pullback $\hat{\sigma}^*\omega$ from a compatibility condition involving transition functions and the equivariance property on $\mc{G}$.

\begin{remark}
    Regardless of the neighborhood we choose, the resulting regular normal parabolic geometry can be extended to a maximal subset and must be locally automorphic to any other regular normal parabolic geometry on this subset.
\end{remark}

\begin{convention}
    For the purpose of this paper, we are interested only in regular points of $\mc{M}$ at which the distribution $\mc{D}$ is genuinely a $(2,3,5)$-distribution. We exclude singular points (e.g.\! $v_1 = v_3 = 0$) and choose a connected component, so that a parabolic geometry of type $(G_2,P)$ exists on this set, still denoted by $\mc{M}$ for simplicity. Then the frame \eqref{frame} forms a basis of $T_x\mc{M}$ at each point $x$. Moreover, we assume that $\mc{M}$ is diffeomorphic to an open, simply connected subset of $\R^5$, so that we do not have to worry about topological obstructions.
\end{convention}

In order to be able to decompose any $\sigma^* \omega$ by homogeneities, we need to fix grading on $T\mc{M}$ by a choice of representatives of $\on{gr} T\mc{M}$. Let us use the frame \eqref{frame},
\begin{equation}\label{grading_on_TM}
    T\mc{M} = \bigoplus_{i= -3}^{-1} (T\mc{M})_{i} \coloneqq \langle X_5, X_4 \rangle \oplus \langle X_3 \rangle \oplus \langle X_2, X_1 \rangle \cong \on{gr}T\mc{M},
\end{equation}
then the $\sigma^*\omega$ is decomposed by homogeneities
\begin{equation}\label{omega_homogeneities}
    \sigma^*\omega = \sum_{k=0}^6 (\sigma^*\omega)_k, \quad (\sigma^*\omega)_k \left( (T\mc{M})_i \right) \subseteq \mfk{g}_{i+k}, \quad i\in \{-3,-2,-1\},
\end{equation}
and the part $(\sigma^*\omega)_0$ is the same for any $\omega$, because it corresponds to the structure \eqref{gr_g-} defined by $\mc{D}$. Now, it is clear from \eqref{kappa_difference} and \eqref{kappa_difference_concretely}, that we can change homogeneity terms of $\sigma^*\omega$ step-by-step without breaking previous homogeneity terms of $\sigma^*\kappa = \kappa \circ \sigma$. Throughout this procedure, we suppose that the section $\sigma$ is fixed and the changes affects just $\omega$.

When we fix a faithful representation of $\mfk{g}$, the connection $\omega$ can be expressed as a matrix of one-forms, as well as $\sigma^*\omega$. Then the matrix of the curvature $(\sigma^*\Omega)_i^j$ can be computed as
\begin{equation}\label{curvature_computation}
    (\sigma^*\Omega)_i^j = \di (\sigma^*\omega)_i^j + (\sigma^*\omega)_i^k \wedge (\sigma^*\omega)_k^j.
\end{equation}
For this purpose, we represent $\mfk{g}$ in the following form (\cite[p.\! 3]{dt2022})
\[ \begin{pmatrix}
   2z_1 + z_2 & b_{10} & b_{11} & \sqrt{2} b_{21} & b_{31} & b_{32} & 0 \\
   a_{10} & z_1 + z_2 & b_{01} & \sqrt{2} b_{11} & - b_{21} & 0 & -b_{32} \\
   a_{11} & a_{01} & z_1 & -\sqrt{2} b_{10} & 0 & b_{21} & -b_{31} \\
   \sqrt{2} a_{21} & \sqrt{2} a_{11} & -\sqrt{2} a_{10} & 0 & \sqrt{2} b_{10} & -\sqrt{2} b_{11} & -\sqrt{2} b_{21} \\
   a_{31} & -a_{21} & 0 & \sqrt{2} a_{10} & -z_1 & -b_{01} & -b_{11} \\
   a_{32} & 0 & a_{21} & -\sqrt{2} a_{11} & -a_{01} & -z_1 - z_2 & -b_{10} \\
   0 & -a_{32} & -a_{31} & -\sqrt{2} a_{21} & -a_{11} & -a_{10} & -2z_1 - z_2
\end{pmatrix}, \]
where the coefficients
\[ a_{32}, a_{31}, a_{21}, a_{11}, a_{10}, a_{01}, z_2, z_1, b_{01}, b_{10}, b_{11}, b_{21}, b_{31}, b_{32} \]
corresponds to a basis of $\mfk{g}$ which we denote
\begin{equation}\label{basis_g}
    \underbrace{e_{-3,2}, e_{-3,1}}_{\mfk{g}_{-3}}, \underbrace{e_{-2,1}}_{\mfk{g}_{-2}}, \underbrace{e_{-1,2}, e_{-1,1}}_{\mfk{g}_{-1}}, \underbrace{e_{0,1}, e_{0,2}, e_{0,3}, e_{0,4}}_{\mfk{g}_{0}}, \underbrace{e_{1,1}, e_{1,2}}_{\mfk{g}_{1}}, \underbrace{e_{2,1}}_{\mfk{g}_{2}}, \underbrace{e_{3,1}, e_{3,2}}_{\mfk{g}_{3}},
\end{equation}
and the identification \eqref{identities} via the Killing form reads as (\cite[p.~6]{dt2022})
\begin{equation}\label{killing_identity}
    (e_{-3,2}^*, e_{-3,1}^*, e_{-2,1}^*, e_{-1,2}^*, e_{-1,1}^*) \mapsto (\tfrac{1}{8}e_{3,2}, \tfrac{1}{8}e_{3,1}, \tfrac{1}{24}e_{2,1}, \tfrac{1}{24}e_{1,2}, \tfrac{1}{24}e_{1,1}),
\end{equation}
where $e_{i,k}^*$ are covectors dual to $e_{i,k}$.

\subsection{Implementation}
Denote the frame \eqref{frame} in reverse order
\begin{equation} \label{pullback_frame}
    X_{-3,2}, X_{-3,1}, X_{-2,1}, X_{-1,2}, X_{-1,1}
\end{equation}
and its dual frame
\begin{equation} \label{pullback_coframe}
    \theta^{-3,2}, \theta^{-3,1}, \theta^{-2,1}, \theta^{-1,2}, \theta^{-1,1},
\end{equation}
so that it corresponds to the chosen grading \eqref{grading_on_TM}. Now, we can decompose $\sigma^*\omega$ by homogeneity terms
\begin{equation} \label{sigma_omega_terms}
    \sigma_x^*\omega = \sum_{h=0}^6 (\sigma_x^*\omega)_h = \sum_{\substack{i,j,h \\ i<0}} \left( \sum_k a_{i,k}^{h,j} (x)\ \theta^{i,k} \right) e_{i+h,j}.
\end{equation}
The pullback of curvature $\Omega$ decomposes by homogeneity terms into
\begin{equation}
    \sigma^*\Omega = \sum_{h=-3}^9 (\sigma^*\Omega)_h = \sum_{\substack{h,k_3 \\ i_1,i_2<0}} \left( \sum_{k_1,k_2} \Omega_{I,K}^{h,j} (x)\ \theta^{i_1,k_1} \wedge \theta^{i_2,k_2} \right) e_{i_1+i_2+h,k_3},
\end{equation}
and the curvature function $\kappa\colon \mc{G} \to C^2(\mfk{g}_-, \mfk{g})$ into
\begin{equation}
    \kappa = \sum_{h=-3}^9 \kappa_h = \sum_{\substack{h,k_3 \\ i_1,i_2<0}} \left( \sum_{k_1,k_2} \kappa_{I,K}^{h,j} (x)\ e^*_{i_1,k_1} \wedge e^*_{i_2,k_2} \right) e_{i_1+i_2+h,k_3},
\end{equation}
where the multi-indices $I=(i_1,i_2)$, $K=(k_1,k_2)$.

By factoring out the one-forms $\theta^{i,k}$ in \eqref{sigma_omega_terms} and forgetting the $e_{i',k'}$-terms for $i'\geq0$, we find the image of the vector fields $X_{i,j}$ under the map $\mathrm{pr_{\mfk{g}/\mfk{p}}} \circ\sigma_x^*\omega$. We use the inverse of this map to find the curvature function $\sigma^*\kappa$.

\begin{algorithm} \label{normalization_algorithm}
    Suppose we have a frame \eqref{pullback_frame} and its dual \eqref{pullback_coframe} of $T\mc{M}$ adapted to the $(2,3,5)$ filtration of $T\mc{M}$. By the following steps, we obtain a pullback $\sigma^*\omega$ of a regular normal Cartan connection $\omega$ for the filtered structure defined by $\mc{D}$.

    \begin{algorithmic}[1]
        \algrenewcommand\algorithmicrequire{\textbf{Input:}}
        \algrenewcommand\algorithmicensure{\textbf{Output:}}
        \Require $\di\theta^{i,k}$, $\di A_a^k$\dots their expressions \eqref{coefficient_function_differential}, \eqref{structure_equation}
        \Ensure $\sigma^*\omega$ \dots pullback of the regular normal Cartan connection
        \State find $a_{i,k}\in \R$ such that $\sigma^*\omega = \sum_{\substack{i,j \\ i<0}} \left(a_{i,k}\ \theta^{i,k} \right) e_{i,k}$ satisfies $\sigma^*\kappa \in C(\mfk{g}_-,\mfk{g})^1$
        \For{$h$ from $1$ to $6$}
            \State denote $t_h = \sum_{\substack{i,j \\ i<0}} \left( \sum_k a_{i,k}^{h,j} (x)\ \theta^{i,k} \right) e_{i+h,j}$
            \State set $\sigma^*\omega = \sigma^*\omega + t_h$
            \State compute $\sigma^* \Omega$
            \State find the inversion $\sigma_x^*\omega^{-1}$ of $\mathrm{pr_{\mfk{g}/\mfk{p}}} \circ\sigma_x^*\omega$
            \State compute preimages $\widetilde{X}_{i,k} = \sigma^*\omega^{-1} \left(e_{i,k}\right)$ for $i<0$
            \State \parbox[t]{\linewidth-\algorithmicindent}{extract $e_{h+i_1+i_2,k_3}$-terms from $\sigma^*\Omega \left(\widetilde{X}_{i_1,k_1}, \widetilde{X}_{i_2,k_2}\right)$ and define $\sigma_x^*\kappa_h$ as their sum}
            \State using \eqref{killing_identity}, rewrite $\kappa_h \in C_2(\mfk{g}_-,\mfk{g})$ into $\kappa_h \in C^2(\mfk{p}_+,\mfk{g})$
            \State solve $\partial^*\left(\sigma^*\kappa_h\right) = 0$ defined by \eqref{del_star} for the unknowns $a_{i,k}^{h,j} (x)$
            \State substitute the solution into $\sigma^*\omega$
        \EndFor
        \State set the rest of unknowns in $\sigma^*\omega$ equal to $0$
        \State \Return $\sigma^*\omega$
    \end{algorithmic}
\end{algorithm}
\begin{proof}
    In the first step, we obtain $\sigma^* \omega$ coming from a regular Cartan connection $\omega$, since $\sigma^* \omega$ in the considered form defines an isomorphism $T_x\mc{M} \cong \mfk{g}/\mfk{p}$ and $\sigma^*\kappa \in C(\mfk{g}_-,\mfk{g})^1$. The requirement $\sigma^*\kappa \in C(\mfk{g}_-,\mfk{g})^1$ is, by \eqref{curvature_function}, equivalent to
    \[ \sigma^*\omega ([X_{i,k_1}, X_{j,k_2}]) \equiv [\sigma^*\omega (X_{i,k_1}), \sigma^*\omega(X_{j,k_2})] \mod \mfk{g}^{i+j+1} \]
    and this equality holds, since the vectors $e_{i,k}$ admit the same relations as $X_{i,k}$ up to a constant coefficient.

    The terms of $\kappa_h$ are precisely $e_{h+i_1+i_2,k_3}$-terms of $\kappa(e_{i,k_1}, e_{j,k_2})$ and the way, how we obtain the corresponding pullback is justified by
    \[ \sigma_x^*\kappa = (\kappa \circ \sigma)(x)(e_{i,k_1}, e_{j,k_2}) = \Omega(\omega_{\sigma(x)}^{-1}(e_{i,k_1}), \omega_{\sigma(x)}^{-1}(e_{j,k_2})) \]
    and
    \[ \sigma_x^*\omega \left(\widetilde{X}_{i,k}\right) = \omega_{\sigma(x)}\left( \sigma_* \widetilde{X}_{i,k}\right) = e_{i,k} + p, \quad p \in\mfk{p}.\]
    
    Following the normalization procedure, see subsection \ref{section:normalization_procedure} above, all other steps would be easily seen as correct if we worked directly with $\omega$ instead of $\sigma^*\omega$. The way to reconstruct $\omega$ from $\sigma^*\omega$ is explained in \ref{section:normalization_procedure}, too. The existence of the coefficients $a_{i,k}^{h,j}$ in each step $h$ follows from the existence of a regular normal Cartan connection differing from $\omega$ by terms of homogeneity $> h$, and its pullback. The claim about the relation between $\widetilde\omega-\omega$ and $\widetilde\kappa-\kappa$, see \eqref{kappa_difference} and \eqref{kappa_difference_concretely}, holds true also for their pullbacks,
    \[ \sigma^*\widetilde\omega - \sigma^* \omega \in \Omega^1 (\mc{M},\mfk{g})_h
    \Rightarrow (\widetilde\omega - \omega)\big|_{\on{im} \sigma} \in \Omega^1 (\on{im} \sigma, \mfk{g})^h
    \Rightarrow \sigma^* (\widetilde\kappa - \kappa) \in C^2 (\mfk{g}_-, \mfk{g})^h. \]
    Using the obvious fact that $\ker \partial^*$ is preserved by equivariancy, we deduce that $\partial^*\left(\sigma^*\kappa_h\right) = 0$ implies $\partial^*\kappa_h = 0$. Since the $h$-iteration of the \textbf{\em for} cycle affects only the $C^1(\mfk{g}_-, \mfk{g})^{h+1}$-part of $\kappa$, after $6$ steps we obtain $\sigma^*\omega$ such that the corresponding $\kappa$ satisfies $\sigma^*\kappa \in C^2(\mfk{g}_-,\mfk{g})^1$ and $\partial^*\kappa_k = 0$ for $k\in\{-3, -2, \dots, 6\}$. For $h \in \{7,8,9\}$, $\partial^*\kappa_h=0$ always, because $\partial^*\colon C_2(\mfk{p}_+,\mfk{g}) \to C_1(\mfk{p}_+,\mfk{g})$ preserves homogeneities and $C_1(\mfk{p}_+,\mfk{g})$ has no element of homogeneity $>6$.
\end{proof}

Let us consider any branch of $(\ell_1,\ell_3,b)$ from Figure \ref{fig:branches}. In this branch, all Lie algebras $\mfk{s}_{\ell_1,\ell_3,b}$ share the same basis vector fields, therefore the multiplication table and the expressions for $\di \theta^{i,j}, \di A_a^k$ from \eqref{coefficient_function_differential}, \eqref{structure_equation} can be written as single $(\ell_1,\ell_3,b)$-depended expressions. The expressions for $\di \theta^{i,j}, \di A_a^k$ with $A_a^k$ as variables are significantly simpler than the expressions for $\di \theta^{i,j}$ in coordinates, and we need them to compute $\sigma^* \Omega$ in the Algorithm \ref{normalization_algorithm}. It allows us to compute the normal Cartan connection with $A_a^k$ as variables and $\ell_1,\ell_3,b$ as parameters at once.

\begin{example}
    For illustration, we show the part of the algorithm for homogeneity one. We start with $\sigma^*\omega$ in the following form
    \begin{equation*}
    \begin{aligned}
        \sigma^*&\omega =  6\, \theta^{-3,2}\ e_{-3,2} - 6\, \theta^{-3,1}\ e_{-3,1} - 2\, \theta^{-2,1}\ e_{-2,1} + \theta^{-1,2}\ e_{-1,2} + \theta^{-1,1}\ e_{-1,1}\\
        &=
        \begin{pmatrix}
            0 & 0 & 0 & 0 & 0 & 0 & 0 \\
            \theta^{-1,1} & 0 & 0 & 0 & 0 & 0 & 0 \\
            \theta^{-1,2} & 0 & 0 & 0 & 0 & 0 & 0 \\
            -2\sqrt{2}\,\theta^{-2,1} & \sqrt{2}\, \theta^{-1,2} & -\sqrt{2}\, \theta^{-1,1}    & 0 & 0 & 0 & 0 \\
            -6\, \theta^{-3,1} & 2\, \theta^{-2,1} & 0 & \sqrt{2}\, \theta^{-1,1} & 0 & 0 &     0 \\
            6\, \theta^{-3,2} & 0 & -2\, \theta^{-2,1} & -\sqrt{2}\, \theta^{-1,2} & 0 & 0 &    0 \\
            0 & -6\, \theta^{-3,2} & 6\, \theta^{-3,1} & 2\sqrt{2}\, \theta^{-2,1} & -\theta^{-1,2} & -\theta^{-1,1} & 0
        \end{pmatrix}.
    \end{aligned}
    \end{equation*}
    We can simply compute by \eqref{curvature_computation} that $\sigma^*\Omega \in \Omega^2(\mc{M},\mfk{g})^1$ and $\on{im} \sigma^*\kappa \subseteq C^2(\mfk{g}_-,\mfk{g})^1$ as well, i.e.~the corresponding $\omega$ is regular. We add terms of homogeneity one, $\sigma^*\widetilde\omega \coloneqq \sigma^*\omega + t_1$,
    \[t_1 = \sum_{\substack{i,j \\ i<0}} \left( \sum_k a_{i,k}^{1,j} (x)\ \theta^{i,k} \right) e_{i+1,j},\]
    where the functions $a_{i,k}^{h,j} (x)$ are unknown. Now, $\sigma^*\widetilde\omega$ is of the form
    \begin{equation*}
    \begin{aligned}
        \sigma^* \widetilde\omega &=  6\, \theta^{-3,2}\ e_{-3,2} - 6\, \theta^{-3,1}\ e_{-3,1} + \left(-2\, \theta^{-2,1} + \sum_{i=1}^2 a_{-3,i}^{1,1} (x)\, \theta^{-3,i}\right)\ e_{-2,1}\\
        &\qquad + \left( \theta^{-1,2} + a_{-2,1}^{1,2} (x)\, \theta^{-2,1}\right)\ e_{-1,2} + \left( \theta^{-1,1} + a_{-2,1}^{1,1} (x)\, \theta^{-2,1}\right)\ e_{-1,1}\\
        &\qquad + \sum_{j=1}^4 \left( \sum_{i=1}^2 a_{-1,i}^{1,j} (x)\, \theta^{-1,i}\right) e_{0,j}.
    \end{aligned}
    \end{equation*}
    Thus, the first column in the matrix form is
    \begin{equation*}
    \begin{pmatrix}
        (2 a_{-1,1}^{1,2}(x) + a_{-1,1}^{1,3}(x))\ \theta^{-1,1} + (2 a_{-1,2}^{1,2}(x) + a_{-1,2}^{1,3}(x))\ \theta^{-1,2} & \cdots \\
        \theta^{-1,1} + a_{-2,1}^{1,1}(x)\ \theta^{-2,1} & \cdots \\
        \theta^{-1,2} + a_{-2,1}^{1,2}(x)\ \theta^{-2,1} & \cdots \\
        \sqrt{2}(-2\,\theta^{-2,1} + a_{-3,1}^{1,1}(x)\ \theta^{-3,1} + a_{-3,2}^{1,1}(x)\ \theta^{-3,2}) & \cdots\\
        -6\, \theta^{-3,1} & \cdots\\
        6\, \theta^{-3,2} & \cdots \\
        0 & \cdots
    \end{pmatrix}.
    \end{equation*}
    The $\mfk{g}_-$-part of $\sigma^*\omega$ is
    \begin{equation*}
    \begin{aligned}
        \mathrm{pr}_{\mfk{g}_-} \circ \sigma^*\widetilde\omega &= 
        6\, \theta^{-3,2} e_{-3,2}  -6\, \theta^{-3,1} e_{-3,1}\\
        &\qquad {} + \tfrac{1}{\sqrt{2}} \big({-2}\sqrt{2}\,\theta^{-2,1} + a_{-3,1}^{1,1}(x)\ \theta^{-3,1} + a_{-3,2}^{1,1}(x)\ \theta^{-3,2} \big) e_{-2,1}\\ 
        &\qquad {} + \big(\theta^{-1,2} + a_{-2,1}^{1,2}(x)\ \theta^{-2,1} \big) e_{-1,2} + \big( \theta^{-1,1} + a_{-2,1}^{1,1}(x)\ \theta^{-2,1} \big) e_{-1,1}.
    \end{aligned}
    \end{equation*}
    and we use its inverse to find vector fields
    \begin{equation*}
        (\widetilde{X}_{-3,2},\widetilde{X}_{-3,1},\widetilde{X}_{-2,1},\widetilde{X}_{-1,2},\widetilde{X}_{-1,1}),
    \end{equation*}
    such that
    \[ \sigma ^* \widetilde\omega (\widetilde{X}_{-i,j}) \equiv e_{-i,j} \mod \mfk{p}. \]
    
    Next, we compute $\sigma^*\widetilde\Omega$ using \eqref{curvature_computation} and express terms of homogeneity one of $\sigma^*\widetilde\kappa = \sigma^* \left(\widetilde\Omega \circ (\widetilde\omega^{-1},\widetilde\omega^{-1})\right)$. Define $\widetilde\Theta^{i,j} \in \Omega^2(\mc{M})$ by
    \begin{equation}\label{theta_coeffs_eij}
        \sigma^*\widetilde\Omega = \sum_{i,j} \widetilde\Theta^{i,j}\, e_{i,j},
    \end{equation}
    then terms of homogeneity one are
    \begin{equation*}
        (\sigma^*\widetilde\kappa)_1 = \sum_{\substack{i < j < 0\\ k_1, k_2, k_3}} \widetilde\Theta^{1+i+j,k_3}(\widetilde{X}_{i,k_1}, \widetilde{X}_{j,k_2})\ \left( e_{i,k_1}^* \wedge e_{j,k_2}^* \otimes e_{1+i+j,k_3} \right) \in C^2(\mfk{g}_-,\mfk{g})_1.
    \end{equation*}
    We change this term $(\sigma^*\widetilde\kappa)_1 \in C^2(\mfk{g}_-,\mfk{g})_1$ to $C_2(\mfk{p}_+,\mfk{g})^1$ using \eqref{killing_identity} and compute $\partial^* (\sigma^*\widetilde\kappa)_1$ using \eqref{del_star} for each term $e_{-i,k_1} \wedge e_{-j,k_2} \otimes e_{1+i+j,k_3}$ separately. Then we find the functions $a_{i,k}^{h,j} (x)$ by solving the equation
    \begin{equation*}
        \partial^* (\sigma^*\widetilde\kappa)_1 = 0.
    \end{equation*}

    We continue by adding terms of successive homogeneities one by one and requiring $\partial^* (\sigma^*\widetilde\kappa)_h = 0$ for each homogeneity $h$ up to and including homogeneity $6$. Then we finally obtain the pullback of the regular normal Cartan connection.
    \hfill$\triangle$
\end{example}

\section{The harmonic curvature} \label{section:harmonic_curvature}

Suppose we have computed a regular normal Cartan connection $\omega_N$ and its curvature $\Omega_N$. Similarly to \eqref{theta_coeffs_eij}, denote $\Omega_N^{i,k}$ the coefficients of $e_{i,k}$ in $\sigma^*\Omega_N$, and $X^N_{i,k}$ the frame that is mapped by $\sigma^*\omega_N$ to $[e_{i,k}] \in \mfk{g}/\mfk{p}$.

According to \cite[pp. 6, 25-26]{dt2022}, the harmonic curvature $\kappa_H \in H_2(\mfk{p}_+,\mfk{g}) \cong S^2(\mfk{g}_{-1})^*$, and it corresponds to the binary quartic form
\begin{equation} \label{cartan_biquartic}
    Q(x_1,x_2) = A_1 {x_2}^4 + 4A_2 x_1 {x_2}^3 + 6 A_3 {x_1}^2 {x_2}^2 + 4 A_4 {x_1}^3 x_2 + A_5 {x_1}^4
\end{equation}
with the coefficients $A_1, \dots, A_5$ computed by
\begin{equation*}
\begin{gathered}
    A_1 = -\Omega_N^{0,1} (X^N_{-3,1}, X^N_{-1,1}),\quad A_2 = \Omega_N^{0,3} (X^N_{-3,1}, X^N_{-1,1}),\quad A_3 = \Omega_N^{0,1} (X^N_{-3,2}, X^N_{-1,2})\\
    A_4 = -\Omega_N^{0,3} (X^N_{-3,2}, X^N_{-1,2}),\quad A_5 = -\Omega_N^{0,4} (X^N_{-3,2}, X^N_{-1,2}).
\end{gathered}
\end{equation*}

Investigating the root type of the binary quartic $Q$, we can estimate the number of symmetries of $(\mc{M},\mc{D}_{\ell_1,\ell_3,b})$. The root type of $Q$ is determined by vanishing of $Q$, the covariant $H$ or the invariant $\Delta$, and we will investigate it for each branch of $(\ell_1,\ell_3,\ell_3)$ separately.

\begin{proposition} \label{prop:biquartic}
    Consider a regular normal Cartan geometry of type $(G_2,P)$ over a filtered manifold $(\mc{M,\mc{D}})$ with the Cartan connection $\omega_N$ and the binary quartic $Q$ defined by \eqref{cartan_biquartic} with a constant root-type. Denote $\mc{S}$ the Lie algebra of infinitesimal symmetries of $(\mc{M},\mc{D})$. The upper bound for the dimension of $\mc{S}$ is determined by the binary quartic $Q$ as follows.
    \begin{enumerate}[label=\arabic*)]
        \item $Q \equiv 0$, if and only if $\dim \mc{S} = 14$;
        \item if $Q \not\equiv 0$ and the hessian $H$ of $Q$ vanishes,
        then $\dim (\mc{S}) \leq 7$, where
            \begin{equation*}
            \begin{aligned}
                H(x_1,x_2) &\coloneqq (A_3 A_5 - {A_4}^2) {x_1}^4 + 2 (A_2 A_5 - A_3 A_4) {x_1}^2 x_2\\
                &\phantom{\coloneqq= {}} + (A_1 A_5 + 2 A_2 A_4 - 3 {A_3}^2) {x_1}^2 {x_2}^2\\
                &\phantom{\coloneqq= {}} + 2 (A_1 A_4 - A_2 A_3) {x_1} {x_2}^3 + (A_1 A_3 - {A_2}^2) {x_2}^4;
            \end{aligned}
            \end{equation*}
        \item if $H \not\equiv 0$ and the discriminant $\Delta$ of $Q$ vanishes, then $\dim (\mc{S}) \leq 6$, where
            \begin{equation*}
            \begin{gathered}
                \Delta \coloneqq i^3 - 27j^2,\\
                i\coloneqq A_1 A_5 - 4 A_2 A_4 + 3 {A_2}^2,\\
                j \coloneqq A_1 A_3 A_5 + 2A_2 A_3 A_4 - {A_3}^3 - {A_2}^2 A_5 - A_1 {A_4}^2;
            \end{gathered}
            \end{equation*}
        \item if $\Delta \neq 0$, then $\dim (\mc{S}) \leq 5$.
    \end{enumerate}
\end{proposition}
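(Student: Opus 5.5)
The plan is to reduce the statement to two standard facts from parabolic geometry and Cartan's classical analysis, and to use the binary quartic only as a $G_0$-invariant bookkeeping device for the harmonic curvature.

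\textbf{Step 1: the case $Q\equiv 0$.} By the identification $\kappa_H \in H_2(\mfk{p}_+,\mfk{g}) \cong S^4(\mfk{g}_{-1})^*$ recalled above, $Q\equiv 0$ holds exactly when $\kappa_H\equiv 0$. For $(G_2,P)$ the harmonic curvature is concentrated in homogeneity $4$ and has no torsion part. So vanishing of $\kappa_H$ forces the full curvature $\kappa$ of the regular normal connection $\omega_N$ to vanish, by the Bianchi-identity argument of \cite[Theorem 3.1.12]{parabook}. Hence the geometry is locally isomorphic to the flat model $G_2\to G_2/P$, whose infinitesimal automorphism algebra is $\mfk{g}_2$, and therefore $\dim\mc{S}=14$. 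For the converse I would use \cite[Theorem 1.5.11]{parabook} together with the fact that $\dim \mc{S}=\dim\mfk{g}$ forces $\kappa=0$: a symmetry algebra of maximal dimension acts locally transitively on $\mc{G}$, so $\kappa$ is constant and $P$-invariant. It is also of positive homogeneity, and such an element must vanish.

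\textbf{Step 2: the non-flat cases.} Here I would use the known bound for infinitesimal symmetries through a point $x$ of a normal geometry. The isotropy subalgebra $\mc{S}_x$ embeds into $\mfk{p}$. Moreover, since $\kappa_H$ is preserved by symmetries, its image preserves the value $\kappa_H(u)$ for $u$ in the fibre over $x$. Thus $\dim\mc{S}\le 5+\dim\mfk{stab}_{\mfk{p}}(\kappa_H(u))$, where $5=\dim\mc{M}$.

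Since $\mfk{p}_+$ acts trivially on the lowest-homogeneity harmonic part, which in this case is all of $H_2$, the stabilizer is controlled by the stabilizer in $\mfk{g}_0\cong\mfk{gl}(2)$ of the binary quartic $Q$. The cases then reduce to invariant theory of binary quartics under $GL(2)$:
\begin{itemize}
\item If $H\equiv0$ but $Q\neq0$, then $Q$ is a fourth power of a linear form. Its stabilizer in $\mfk{gl}(2)$ is $2$-dimensional, and this is where the bound $7$ comes from.
\item If $H\not\equiv0$ and $\Delta=0$, then $Q$ has a repeated root. Its stabilizer is at most $1$-dimensional, for the type $[2,2]$ ($x_1^2x_2^2$ scaled by the torus), giving $\le 6$.
\item If $\Delta\ne0$, there are four distinct roots. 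A nonzero element of $\mfk{gl}(2)$ cannot fix them, since the cross-ratio is rigid and scalars rescale $Q$. The stabilizer is $0$, giving $\le5$.
\end{itemize}

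\textbf{Main obstacle.} The delicate point is the stabilizer bound $\dim\mc{S}\le \dim\mc{M}+\dim\mfk{stab}(\kappa_H)$. One must justify that the isotropy acts faithfully on the fibre data and that $\mfk{p}_+$-components do not contribute. I would first try to cite the Kruglikov--The gap-phenomenon result (\cite{dt2022} and related work), which gives exactly $\dim\mc{S}\le \dim\mfk{a}^{\phi}$ with $\mfk{a}^\phi=\mfk{g}_-\oplus\mfk{ann}(\phi)$ and its Tanaka prolongation. One then checks that the prolongation in $\mfk{p}_+$ is zero for each nonzero root type. This reproduces Cartan's original bounds \cite{ec1910}.

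The constant root-type hypothesis ensures that the estimate holds uniformly on $\mc{M}$.
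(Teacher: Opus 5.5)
Your Step 1 is correct and more detailed than the paper. $Q\equiv0$ iff $\kappa_H\equiv0$ iff $\kappa\equiv0$. Your converse, via local transitivity on $\mc{G}$ and the grading element killing a $\mfk{p}$-invariant curvature of positive homogeneity, is fine.

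\textbf{The gap is in Step 2.} The inference ``$\mfk{p}_+$ acts trivially on $H_2$, so the stabilizer is controlled by the stabilizer in $\mfk{g}_0$'' runs the wrong way. Trivial action means that all of $\mfk{p}_+$ lies in $\mathfrak{stab}_{\mfk{p}}(\kappa_H(u))$. That stabilizer is the full preimage of $\mathfrak{ann}_{\mfk{g}_0}(Q)$ under $\mfk{p}\to\mfk{g}_0$, so its dimension is $\dim\mathfrak{ann}_{\mfk{g}_0}(Q)+5$. The pointwise isotropy bound you actually establish is therefore $\dim\mc{S}\le 10+\dim\mathfrak{ann}_{\mfk{g}_0}(Q)$, i.e.\ $12$, $11$, $10$. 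Your bullets (``this is where the bound $7$ comes from'') silently drop the $\mfk{p}_+$-part. Checking faithfulness on fibre data cannot repair this: the single value $\kappa_H(u)$ places no constraint on the $\mfk{p}_+$-part of the isotropy.

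\textbf{The missing idea.} An isotropy symmetry must also annihilate the invariant derivatives of $\kappa_H$ at $u$. Concretely, $\tilde\xi$ commutes with the constant fields $\omega^{-1}(Y)$, and $[\zeta_X,\omega^{-1}(Y)]=\omega^{-1}([X,Y])$. Take $X=\omega(\tilde\xi(u))\in\mfk{g}^1$ and $Y\in\mfk{g}_{-1}$, and differentiate $\tilde\xi\cdot\kappa_H=0$ along $\omega^{-1}(Y)$; this shows that $[X,Y]$ annihilates $\kappa_H(u)$. Iterating gives the Kruglikov--The inclusion $\on{gr}\,\omega_u(\mc{S})\subset\mfk{a}^{\kappa_H(u)}$, where $\mfk{a}^{\phi}$ is the Tanaka prolongation of $\mfk{g}_-\oplus\mathfrak{ann}_{\mfk{g}_0}(\phi)$. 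You then need the $G_2/P_1$-specific fact that $\mfk{a}^{\phi}_+=0$ for every $\phi\neq0$ (prolongation rigidity).

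Only after these two ingredients does $\dim\mc{S}\le 5+\dim\mathfrak{ann}_{\mfk{g}_0}(Q)$ hold. Your $\mathfrak{gl}(2)$ stabilizer counts then give $7$, $6$, $5$: $2$ for a fourth power, at most $1$ for types D, III, II, and $0$ for four distinct roots. For the last case, the right reason is that a nonzero element of $\mathfrak{sl}(2)$ vanishes at no more than two points of the projective line, not the rigidity of the cross-ratio.

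You mention the Kruglikov--The bound only as a fallback, but it is the indispensable step. It is also exactly what the paper's proof consists of: a citation of Proposition 5.2.1 of \cite{bkdt2017}, plus Olver for the dictionary between root types and the vanishing of $Q$, $H$, $\Delta$. Note that the gap-phenomenon result is \cite{bkdt2017}, not \cite{dt2022}. Once you replace your stabilizer paragraph by that citation, or by the derivative argument above, your proof coincides with the paper's.
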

\begin{proof}
    The correspondence between the root type of $Q$ and the upper bound of $\dim \mc{S}$ is described in \cite{bkdt2017}[Proposition 5.2.1]. For the correspondence between the root types of $Q$ and invariants and covariants, see \cite{olver}[p.~29].
\end{proof}

\section{Conclusions and further remarks} \label{section:results}

Our intention was to determine whether there are parameters $\ell_1,\ell_3,b$ resulting in a locally flat model of Cartan geometry of type $(G_2,P)$, and ideally, to find parameters for the robot with $\dim \mc{S} \geq 6$. We proceeded in the following steps for each branch separately. We computed:
\begin{enumerate}[label=\arabic*)]
    \item the structure coefficients $C_{\alpha\beta}^\gamma$, see \eqref{structure_coefficients},
    \item the expression of the functions $A_a^k$ in coordinates, see \eqref{Aak_functions},
    \item the expression of the exterior derivative $\di \theta^i$ and $\di A_a^k$ using \eqref{structure_equation}, \eqref{coefficient_function_differential} and the simplification \eqref{Aai_relations}, but we do not substitute the coordinate expression in $A_a^k$,
    \item the parameter-depended normal Cartan connection $\omega$ and its curvature function $\kappa$ by Algorithm \ref{normalization_algorithm},
    \item the Cartan binary quartic $Q$ with $A_a^k$ as variables and $\ell_1, \ell_3, b$ as parameters,
    \item the coordinate expression of $Q$, i.e.~we substitute into $A_a^k$,
    \item parameters for which $Q \equiv 0$.
\end{enumerate}

We have obtained the regular normal Cartan connection and its curvature function for each branch in a compact form with $A_a^k$ as variables and $\ell_1, \ell_3, b$ as parameters. However, the coefficient functions $A_a^k$ are not independent. Using the coordinates leading to the polynomial form \eqref{polynomial_generators} of $X_i$, we find all $A_a^k$ to be rational functions in $2$ of the coordinates. Of course, we can also find several cubic polynomial relations from \eqref{coefficient_function_differential} by $\di (\di A_a^k) = 0$. Therefore, to investigate root-types of $Q$, we needed to use the coordinate expression, i.e.~to substitute into $A_a^k$, which made the coefficients of binary quartic rational functions with numerators of large degree, and our limited resources made it computationally impossible to check vanishing of the invariants from Proposition \ref{prop:biquartic}, except $Q \equiv 0$.

\begin{theorem} \label{thm:results}
    There are no parameters $\ell_1, \ell_3, b$ such that the filtered structure $(\mc{M}, \mc{D}_{\ell_1, \ell_3, b})$ of the robot admits $14$-dimensional algebra of infinitesimal symmetries.
\end{theorem}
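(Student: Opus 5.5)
By Proposition~\ref{prop:biquartic}, $\dim\mc{S}=14$ holds if and only if the Cartan biquartic $Q$ vanishes identically. So the plan is to show that for every admissible $(\ell_1,\ell_3,b)\in P=\R^+\times\R^+\times[0,1]$, at least one coefficient $A_1,\dots,A_5$ of \eqref{cartan_biquartic} is a nonzero function on $\mc{M}$.

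We work branch by branch, using the list preceding Proposition~\ref{prop:branches}. By Lemma~\ref{lemma:symmetry} it suffices to treat one representative of each pair of equivalent branches. Since every parameter triple lies in one of these branches, this covers all of $P$.

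For each branch the steps are as follows.
\begin{enumerate}[label=\arabic*)]
\item Take the multiplication table of $\mfk{s}_{\ell_1,\ell_3,b}$ in the given basis.
\item Form the structure equations \eqref{structure_equation} and the differentials \eqref{coefficient_function_differential} of the functions $A_a^k$, simplified by Lemma~\ref{lemma:Aai_relations}.
\item Run Algorithm~\ref{normalization_algorithm}, treating the $A_a^k$ as formal variables and $\ell_1,\ell_3,b$ as parameters. This yields $\sigma^*\omega_N$ and the coefficients $A_1,\dots,A_5$ as polynomials in the $A_a^k$ with coefficients rational in the parameters.
\item Substitute the coordinate expressions of the $A_a^k$, computed in the polynomial coordinates \eqref{polynomial_coordinates}, where they are rational in two coordinates.
\end{enumerate}

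To show $Q\not\equiv0$ it is enough to exhibit a single nonvanishing coefficient. I would choose a simple one, say $A_5$ or $A_1$, clear denominators, and regard its numerator as a polynomial in the two coordinates $p_1,p_3$ with coefficients polynomial in $\ell_1,\ell_3,b$. If $Q$ vanished identically, every coefficient of that numerator would vanish. Solving this polynomial system, for example with a Gröbner basis or resultants, restricted to the branch conditions, should give no solutions in $P$ of that branch. Some branches may leave a few candidate parameter points. For those I would substitute the specific values and check that another coefficient, or the same one at a concrete point of $\mc{M}$, is nonzero.

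The standard snake ($b=\ell_1=\ell_3=\tfrac12$) is a consistency check. There the Jacobian argument in Proposition~\ref{prop:std_snake_symmetries} already shows $Q\not\equiv0$, since for instance the constant term $\tfrac{8}{75}$ of the $x_1^4$ coefficient survives at $A=B=0$ while other terms do not.

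The main obstacle is the generic branch 8), of dimension $9$. There the substituted coefficients are rational functions with very large numerators, and eliminating the parameters from all monomial coefficients is expensive. To get around this, I would first look at low-order monomials in $p_1,p_3$, such as the coefficients of $1$, $p_1$ and $p_3$, or at leading terms. These should give a small system in $\ell_1,\ell_3,b$ with few or no common zeros in $P$. Any survivors would then be handled by specialization as above, and the whole computation would be carried out in computer algebra.
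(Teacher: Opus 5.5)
Your proposal is correct and follows essentially the same route as the paper's proof. You go branch by branch (up to the equivalence of Lemma~\ref{lemma:symmetry}), run Algorithm~\ref{normalization_algorithm} with the $A_a^k$ as formal variables and the relations \eqref{Aai_relations}, substitute the coordinate expressions of the $A_a^k$, and check that $Q\equiv 0$ occurs for no parameters, using Proposition~\ref{prop:biquartic} to identify $\dim\mc{S}=14$ with $Q\equiv 0$.

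The differences are practical only. The paper stops the normalization at homogeneity~$4$, because the quartic is read off from $\kappa_4$, which later steps do not change. You spell out an elimination strategy for the final nonvanishing check (Gr\"obner bases, low-order monomials, specialization at leftover parameter points), which the paper simply reports as a computer search.
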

\begin{proof}
    For each branch from Proposition \ref{prop:branches}, we compute the structure coefficients $C_{\alpha \beta}^\gamma$ of the Lie algebra $\mfk{s}_{\ell_1, \ell_3, b}$ and the coefficient functions $A_a^k$ expressed in the polynomial coordinates in the Maple worksheet \cite[\texttt{vectorfields.mw}]{attached_files} and save them into \cite[\texttt{branches.m}]{attached_files}.
    
    The normalization Algorithm \ref{normalization_algorithm} using the relations \eqref{Aai_relations} for $A_a^k$ is realized for each branch in \cite[\texttt{normalization.mw}]{attached_files}. To save the computational time, we perform the normalization algorithm only up to homogeneity $4$. It is sufficient for finding the binary quartic, since it contains only coefficients of $\kappa_4$ that does not change in other steps of the algorithm. The resulting binary quartics are saved in \cite[\texttt{biquartics.m}]{attached_files}. In \cite[flatcase.mw]{attached_files}, we substitute the polynomial coordinate expressions into $A_a^k$, and search for parameters, where the binary quartic $Q$ (and therefore also $\kappa$) vanish. We find that it does not happen for any parameter.
\end{proof}

The question of whether there is at least one distribution $\mc{D}_{\ell_1, \ell_3, b}$ whose symmetry group acts locally transitively remains unanswered. To use our resulting binary quartics to get an answer, we would require either a more suitable substitution into $A_a^k$ or greater computational power. Further, one could exploit the resulting normal Cartan connection for finding the Lie algebra of infinitesimal symmetries $\mc{S}$ for concrete parameters, in particular for the standard snake.

The technique presented in this paper could likewise be applied to other robots with bracket-generating distributions or filtered manifolds that depend smoothly on parameters. Although, the requirement on a distribution to contain basis vector fields generating a finite dimensional Lie algebra is very restrictive. Also the role of the notion of the growth vector with respect to $\mathfrak{d}$ (already defined in \cite{md2024}) remains unclear to us.

Instead of searching for homogeneous spaces of $(2,3,5)$-geometries between the branches, we could also ask in a reverse way, whether we can classify all finite dimensional Lie algebras occurring for homogeneous spaces of $(2,3,5)$-geometries. We have no doubt that the theory of Section \ref{section:coframe_structure} could be developed further or specified for particular geometries. This setting can also be viewed as a special case of finite dimensional Lie subalgebras of $\mfk{X}(\R^n)$, a widely studied but in general complicated problem.

\nocite{*}

\end{document}